\setlist[enumerate]{label=(\roman*)}
\newtheorem{theorem}{Theorem}[section]
\newtheorem*{theorem*}{Theorem}
\newtheorem{proposition}[theorem]{Proposition}
\newtheorem{corollary}[theorem]{Corollary}
\newtheorem{lemma}[theorem]{Lemma}
\theoremstyle{definition}
\newtheorem{definition}[theorem]{Definition}
\newtheorem*{thm*}{Theorem}
\newtheorem*{claim*}{Claim}
\newtheorem{remark}[theorem]{Remark}
\newtheorem{example}[theorem]{Example}
\newtheorem{question}[theorem]{Question}
\newtheorem*{question*}{Question}
\newtheorem*{acknowledgement}{Acknowledgement}
\newcommand{\C}{\mathbb{C}}
\newcommand{\R}{\mathbb{R}}
\newcommand{\Z}{\mathbb{Z}}
\newcommand{\N}{\mathbb{N}}
\newcommand{\K}{\mathbb{K}}
\newcommand{\eps}{\varepsilon}
\newcommand{\uniTo}{\rightrightarrows}
\newcommand{\supp}{\operatorname{supp}}
\newcommand{\abs}[1]{\left| #1 \right|}
\newcommand{\norm}[1]{\left\| #1 \right\|}
\newcommand{\eqNote}[1]{\stackrel{\text{#1}}{=}}
\newcommand{\leqNote}[1]{\stackrel{\text{#1}}{\leq}}
\renewcommand{\ker}{\operatorname{Ker}}
\newcommand{\CR}{\mathrm{CR}}
\newcommand{\GL}{\mathrm{GL}}
\newcommand{\Per}{\mathrm{Per}}
\newcommand{\linsp}{\mathrm{span}}
\newcommand{\cllinsp}{\overline{\linsp}}
\newcommand{\ignore}[1]{}
\begin{document}
	
	\title{Dynamical properties of weighted shifts on sequence spaces}
	
	\author[Hevessy]{Michal Hevessy}
	\address[Hevessy]{Faculty of Mathematics and Physics, Charles University, Sokolovská 83, Prague, 186 00 \newline
		\href{https://orcid.org/0009-0003-4192-2407}{ORCID: \texttt{0009-0003-4192-2407}}}
	\email{hevessy@karlin.mff.cuni.cz}
	
	\author[Raunig]{Tomáš Raunig}
	\address[Raunig]{Institute of Mathematics, Czech Academy of Sciences, Žitná 25, 115 67, Czech Republic\newline second address: Faculty of Mathematics and Physics, Charles University, Sokolovská 83, Prague, 186 00 \newline
		\href{https://orcid.org/0009-0001-3425-8726}{ORCID: \texttt{0009-0001-3425-8726}}}
	\email{raunig@karlin.mff.cuni.cz}
	
	\thanks{Tomáš Raunig was supported by the  Czech Science Foundation grant 25-15366S. Michal Hevessy was supported by Czech Science Foundation grant 24-10705S. Both authors were also supported by the grant SVV-2025-260827}
	
	\date{\today}
	\keywords{Weighted shifts, Linear dynamics, Shadowing property}
	\subjclass[2020]{47B37, 46A45, (primary), and 37B65 (secondary)}

	\begin{abstract} Motivated by three recent open questions in the study of linear dynamics, we study weighted shifts on sequence spaces. First, we provide an~example of a~weighted shift on a~locally convex space whose topology is generated by a~sequence of complete seminorms which is generalized hyperbolic, but does not have the shadowing property. Next, we characterise uniform topological expansivity on Fr\'echet spaces satisfying a~number of very natural conditions. Finally, we study the periodic shadowing property on normed spaces leading to a~condition formulated purely in terms of weights which we show is necessary for the periodic shadowing property on $\ell_p$ and equivalent on $c_0$.\end{abstract}
	
	\maketitle
	
	\section{Introduction}
	
	The study of the dynamics of linear operators in Banach spaces is a~field with long history as witnessed by many references -- let us mention, for example,  \cite{Dynamics_of_linear_operators, Linear_Chaos, A_lemma_in_the_theory_of_structural_stability, The_shadowing_lemma_in_the_linear_case}. As the theory in Banach spaces is getting better and better developed, some people have started to consider linear dynamics in broader settings, that is, on locally convex spaces. Some notable references are \cite{Darji, Shadowing_and_chain_rec, Transitive_and_hypercyclic_operators_on_locally_convex_spaces, Weakly_mixing_operators_on_topological_vector_spaces, Hypercyclic_operators_on_topological_vector_spaces}.
	
	One of the most popular and tangible operators, in both of the aforementioned settings, are weighted shifts. Recall that a~\emph{(Fr\'echet) sequence space over $\Z$} is a~linear subspace $X$ of $\K^\Z$ (with $\K \in \{\R, \C\}$) whose topology makes it a~locally convex (Fr\'echet) space and the embedding $X \hookrightarrow \K^\Z$ is continuous. Provided that the canonical basic vectors $e_n = (\delta_{n,j})_{j \in \Z}$ (where $\delta_{n,j}$ is the Kronecker delta) all belong to $X$, we say that they form a~basis of $X$ if for every $(x_n) \in X$ we have
	\begin{equation*}
		(x_n) = \lim_{m,n \to \infty} (\dots, 0, x_{-m}, x_{-m+1}, \dots, x_{n-1}, x_n, 0, \dots).
	\end{equation*}
	\begin{definition}
		Let $w = (w_n) \in \K^\Z$. We define the \emph{bilateral weighted forward (resp. backward) shift} to be the operator $F_w: \K^\Z \to \K^\Z$ (resp. $B_w: \K^\Z \to \K^\Z$) defined by
		\begin{equation*}
			F_w((x_n)_{n\in\Z}) = (w_{n-1}x_{n-1})_{n\in\Z}
			\quad \text{resp.} \quad
			B_w((x_n)_{n\in\Z}) = (w_{n+1}x_{n+1})_{n\in\Z},
			\quad (x_n)_{n\in\Z} \in \K^\Z.
		\end{equation*}
		If $X$ is a~sequence space over $\Z$ and $w \in \K^{\Z}$ is chosen so that $F_w(X) \subset X$ (resp. $B_w(X) \subset X$), then we can consider $F_w$ (resp. $B_w$) to be an~operator on $X$.
	\end{definition}
	
	Let $w = (w_n)_{n\in\Z}$, $F_w$ and $B_w$ be as in the definition above. When considered as operators on $\K^\Z$, if the bilateral weighted shifts are invertible then the inverse of a~forward shift is a~backward shift and vice versa. To be precise, $F_w^{-1} = B_v$ and $B_w^{-1} = F_u$ where $v = (w^{-1}_{n-1})_{n\in\Z}$ and $u = (w^{-1}_{n+1})_{n\in\Z}$. When considered as operators on a~sequence space $X$, the closed graph theorem guarantees that the shift operators are automatically continuous. On \(\ell_p\) (\(1\leq p < \infty\)) and \(c_0\) being invertible for \(B_w\) and \(F_w\) is equivalent to \(w\) being bounded above and \(\inf_{i \in \Z}|w_i| >0\).
	
	Dynamical properties of weighted shifts are a well studied subject for some years now see e.g \cite{Two_problems_for_weighted_shifts_in_linear_dynamics, Dynamics_of_shift_operators_on_non-metrizable_sequence_spaces, Hyperc_chaotic_weight_shifts, Chaos_for_backward_shift_operators, Dynamics_of_shift_operators, Hypercyclic_weighted_shifts}.
	
	One of the fundamental properties of operators in linear dynamics is the shadowing property which will appear in multiple places throughout this paper, so we recall it and related definitions here. 
	
	Let \(X\) be a~locally convex space over \(\K\) and \(T\) be a~continuous linear operator. If \(U\) is a~neighbourhood of \(0\) a~\emph{\(U\)-pseudotrajectory of \(T\)} is a finite or infinite sequence \((x_j)_{i<j<k}\) in \(X\) (\( - \infty \leq i < k \leq \infty\)) having at least two elements such that 
	\[T x_j - x_{j+1} \in U \quad \text{for all } i < j < k-1.\]
	\begin{definition}
		Let \(X\) be a~locally convex space over \(\K\) and \(T\) be a~continuous linear operator.
		We say that \(T\) has the \emph{finite (resp. positive) shadowing property} if for every neighbourhood \(V\) of \(0\) in \(X\), there is a~neighbourhood \(U\) of \(0\) in \(X\) such that every finite \(U\)-pseudotrajectory \((x_j)_{j=0}^{k}\), where \(0<k < \infty\) (resp. every \(U\)-psudotrajectory \((x_j)_{j \in \N_0}\)) of \(T\) is \(V\)-shadowed by the trajectory of some \(x \in X\) i.e 
		\[x_j - T^j x  \in V \quad \text{for all } j \in \{0, \dots, k\}\ (\text{resp. for all } j \in \N_0).\]
		
		Moreover, if \(T\) is invertible we define the shadowing property by replacing the set \(\N_0\) by \(\Z\) in the definition of positive shadowing property. We say that \(T\) has the \emph{periodic shadowing property} if every periodic \(U\)-pseudotrajectory \((x_j)_{j \in \Z}\) of \(T\) (pseudotrajectory is periodic if there is \(p \in \N\) such that for all \(k \in \Z\) we have \(x_{k+p} = x_{k}\)) is \(V\)-shadowed by some periodic point of \(T\).
	\end{definition}
	Sometimes, the term $U$-chain is used to refer to a~finite $U$-pseudotrajectory. Also, when working on Banach spaces, the term $\delta$-pseudotrajectory where $\delta > 0$ is used as a~shorthand for a~$U$-pseudotrajectory with $U = B(0, \delta)$.
	\begin{remark}
		In the following we will sometimes abbreviate the shadowing property by PS and the periodic shadowing property by PSP. Also, if it will be clear from context, we will omit the name of the operator in the name of the pseudotrajectories.
	\end{remark}
	
	We leave the definitions of the other notions to their appropriate sections, but let us give a~quick overview of our main results.
	
	\Cref{sect:GenHypAndShadowing} deals with the question~\cite[Problem~C]{Darji} which asks whether some natural assumptions on the seminorms generating the topology of a~locally convex space are enough to guarantee that generalized hyperbolic operators must have the shadowing property. We answer this question in the negative.
	\begin{theorem*}[{\Cref{ex:GenHypNotShadowing}}]
		There exists a~locally convex space $X$ whose topology is generated by a~countable family of complete seminorms and a~generalized hyperbolic operator $T: X \to X$ which does not have the shadowing property.
	\end{theorem*}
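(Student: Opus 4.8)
The statement is an existence result, so the plan is to exhibit an explicit space and operator and then verify the two requirements separately. For the space, I would take $X$ to be a Köthe-type sequence space over $\Z$: a subspace of $\K^\Z$ whose topology is given by an increasing fundamental sequence of seminorms $p_1 \le p_2 \le \cdots$ of the form $p_k(x) = \left( \sum_{n \in \Z} \abs{x_n}^p a_{k,n}^p \right)^{1/p}$ for a suitable Köthe matrix $(a_{k,n})$, chosen so that each $p_k$ is complete (its associated local Banach space is an honest weighted $\ell_p$), the canonical vectors $(e_n)$ form a basis, and --- crucially --- the local Banach spaces are \emph{strictly} increasing in strength, so that convergence with respect to a single $p_k$ is genuinely weaker than membership in $X$. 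For the operator I would take a bilateral weighted forward shift $T = F_w$ whose weights follow a hyperbolic sign pattern relative to the splitting $M = \cllinsp\{e_n : n \ge 0\}$ and $N = \cllinsp\{e_n : n < 0\}$, i.e. $\abs{w_n}$ small for $n \ge 0$ and large for $n < 0$.

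Verifying generalized hyperbolicity is the routine half. One checks $F_w(M) \subseteq M$ and $F_w^{-1}(N) \subseteq N$, that $X = M \oplus N$ topologically, and that in each individual seminorm $p_k$ the restriction $F_w|_M$ contracts while $(F_w|_N)^{-1}$ contracts. Here the weights and the Köthe matrix must be tuned so that the contraction rate $\lambda_k < 1$ seen by $p_k$ is legitimate for every $k$ --- guaranteeing generalized hyperbolicity --- while allowing $\lambda_k \uparrow 1$. It is precisely this non-uniformity across seminorms that the shadowing argument will exploit.

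For the failure of shadowing, the key observation is that, for a hyperbolic-type shift, the orbit that could shadow a given pseudotrajectory is essentially unique when measured in a fixed seminorm: it is forced to be the sum of a forward (stable) series and a backward (unstable) series of the perturbations. I would fix a target neighbourhood $V = \{x : p_K(x) < \eta\}$ and, for an arbitrary basic neighbourhood $U = \{x : p_k(x) < \eps\}$, construct a $U$-pseudotrajectory whose unique shadowing candidate converges in the weak seminorm $p_K$ but whose stronger seminorms $p_{K'}$ ($K' > K$) diverge, so that this candidate does not belong to $X$ at all. Choosing the perturbations supported far out along $N$ (the unstable side), where the Köthe weights of the stronger seminorms grow fastest, forces any $X$-element attempting to $V$-shadow to carry components of unbounded $p_{K'}$-size; one then argues that no genuine $x \in X$ can satisfy $p_K(x_j - T^j x) < \eta$ for all $j$.

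The main obstacle is the simultaneous tuning required in the second and third steps: the weights and Köthe matrix must make every seminorm individually contracting or expanding (so that $T$ is bona fide generalized hyperbolic), while the loss of uniformity is severe enough that the would-be shadowing orbit escapes $X$. The delicate point in the no-shadowing argument is to promote \enquote{the natural series candidate leaves $X$} into \enquote{no element of $X$ shadows at all}, which requires pinning down the shadowing orbit uniquely modulo $\ker p_K$ from the pseudotrajectory and then showing that this forced profile is incompatible with membership in $X$.
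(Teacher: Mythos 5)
There is a genuine gap, and it sits exactly at the point your proposal treats as routine: the completeness of the seminorms. Completeness of $p_k$ on $X$ means that every $p_k$-Cauchy sequence of elements of $X$ has a $p_k$-limit \emph{in $X$}; it is not the (automatic) statement that the abstract completion of $X/\ker p_k$ is an honest weighted $\ell_p$ space, which is what your parenthetical suggests. With Köthe weights $a_{k,n}>0$ for all $n$, each $p_k$ is a norm, and then your two demands contradict each other: since $c_{00}(\Z)\subset X$ is $p_k$-dense in the local Banach space $X_k$, any point of $X_k\setminus X$ is the $p_k$-limit of a Cauchy sequence from $X$ having no $p_k$-limit in $X$ (limits with respect to a norm are unique), so $p_k$ can be complete on $X$ only if $X=X_k$, that is, only if the local Banach spaces do \emph{not} strictly increase. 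In fact the situation is worse: if some $p_k$ is a complete norm on $X$, the open mapping theorem applied to the identity $(X,\tau)\to(X,p_k)$ forces the whole topology to coincide with the $p_k$-topology, so $X$ is a Banach space, and on a Banach space every invertible generalized hyperbolic operator has the shadowing property (this is \cite[Theorem~6]{Darji} itself, whose kernel-invariance hypothesis is vacuous for norms). So within your framework the final step can never be completed: whenever your seminorms really are complete, the ``escaping'' shadow candidate does lie in $X$, and shadowing holds.

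The only way to reconcile ``each seminorm complete'' with ``the natural shadow escapes $X$'' is to use seminorms with very large kernels on a space that is itself not sequentially complete, and this is what the paper does: $X=c_{00}(\Z)$ with $\norm{x}_n=\max_{\abs{k}\leq n}\abs{x(k)}$. Each such seminorm sees only a finite window of coordinates, so it is trivially complete (take the coordinatewise limit on the window and extend by zero, staying inside $c_{00}$), yet the space omits the point $y(i)=\eps 2^{-\abs{i}}$, which is the natural shadow of the pseudotrajectory built from truncations of $y$; any finitely supported $x$ then fails to shadow at a time $k$ beyond its support, because the $0$-th coordinate of $x_k-T^kx$ equals $\eps$. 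Your closing idea --- pinning down the shadow from the pseudotrajectory by using that the conditions at different times probe different coordinates of $x$ --- is exactly the mechanism of that contradiction, but it can only bite once the ambient space is incomplete and the seminorms are blind outside finite windows; with weighted-$\ell_p$ seminorms of full support it provably cannot.
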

	
	In~\Cref{sect:UTE} we deal with~\cite[Problem~E]{Darji} which asks for a~characterisation of uniform topological expansivity similar to the one the authors of~\cite{Darji} provide for topological expansivity. To keep the introduction simple, let us mention the formulation for $\ell_p$ and $c_0$, but note that in~\Cref{sect:UTE} we have the characterisation for abstract spaces satisfying some reasonable conditions which we later also verify for K\"othe spaces.
	\begin{theorem*}[{\Cref{cor:UTEforEllpAndc0}}]
		Let $X = c_0(\Z)$ or $X = \ell_p(\Z)$ for $1 \leq p < \infty$ and $F_w$ be an~invertible bilateral weighted forward shift on $X$. Then the following are equivalent:
		\begin{enumerate}
			\item \(F_w\) is uniformly topologically expansive;
			\item There is a~decomposition \(I_{+} \cup I_{-} =\Z\) such that
			\begin{align*}
				\abs{w_{i+n-1} \cdots w_{i}} = \norm{F_w^n(e_i)} &\to \infty \ \text{uniformly on} \ I_+ \ \text{as} \ n \to \infty, \\
				\abs{w_{i-n} \cdots w_{i-1}}^{-1} = \norm{F_w^{-n}(e_i)} &\to \infty \ \text{uniformly on} \ I_- \ \text{as} \ n \to \infty.
			\end{align*}
		\end{enumerate}
	\end{theorem*}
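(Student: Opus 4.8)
The plan is to linearise the whole problem by passing to logarithms of the weights. Since $F_w$ is invertible on $X\in\{c_0(\Z),\ell_p(\Z)\}$ we have $\sup_i\abs{w_i}<\infty$ and $\inf_i\abs{w_i}>0$, so $\log\abs{w_i}$ is a bounded sequence. Define a potential $L\colon\Z\to\R$ by $L_0=0$ and $L_{k+1}-L_k=\log\abs{w_k}$; it is Lipschitz in the sense that $\abs{L_{k+1}-L_k}\le C$ for a fixed $C$. The point of this device is the identity
\[
	\log\norm{F_w^n e_i}=L_{i+n}-L_i,\qquad i\in\Z,\ n\in\Z,
\]
which turns the action of the shift on the basis into a mere translation along the graph of $L$. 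Condition (ii) then reads: there is a partition $I_+\sqcup I_-=\Z$ with $L_{i+n}-L_i\to\infty$ uniformly for $i\in I_+$ and $L_{i-n}-L_i\to\infty$ uniformly for $i\in I_-$, as $n\to\infty$. Unwinding the definition of uniform topological expansivity on $X$ and using homogeneity of the norm, condition (i) becomes: for every $M$ there is $N$ so that every unit vector $u$ satisfies $\max_{\abs{n}\le N}\norm{F_w^n u}>M$. Restricting this to the basis vectors $e_i$ and translating through $L$ gives the clean statement
\[
	(\mathrm{E})\qquad \forall\lambda>0\ \exists N\ \forall i\in\Z:\ \max_{\abs{k}\le N}\bigl(L_{i+k}-L_i\bigr)>\lambda,
\]
i.e. no index is an ``$N$-local maximum of $L$ up to height $\lambda$''.

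For (ii)$\Rightarrow$(i) I would argue directly on $X$. Given a unit vector $u$, split it as $u=u_++u_-$, where $u_\pm$ is the restriction of $u$ to $I_\pm$. Because $(e_i)$ is a normalised unconditional basis on which $F_w^n$ acts by $e_i\mapsto(\text{scalar})\,e_{i+n}$, and because $I_++n$ and $I_-+n$ again partition $\Z$, the vectors $F_w^n u_+$ and $F_w^n u_-$ have disjoint supports; hence $\norm{F_w^n u}\ge\norm{F_w^n u_\pm}$ in both $c_0$ and $\ell_p$. Since $\norm{u}=1$, one of $\norm{u_+},\norm{u_-}$ is bounded below by a constant $c>0$ (one may take $c=1$ on $c_0$ and $c=2^{-1/p}$ on $\ell_p$). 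If $\norm{u_+}\ge c$, then using the disjointness of the blocks and the uniform forward escape on $I_+$ one gets $\norm{F_w^n u}\ge\norm{F_w^n u_+}\ge\bigl(\inf_{i\in I_+}\norm{F_w^n e_i}\bigr)\norm{u_+}>M$ for all $n$ beyond the uniform escape time $n_0(M/c)$; symmetrically for $\norm{u_-}\ge c$ using $F_w^{-n}$. Taking $N$ to be the larger of the two escape times yields (i).

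The substance of the theorem is (i)$\Rightarrow$(ii), which I would carry out entirely at the level of $L$. Here one must beware a trap: for a fixed $i$ the pointwise statement $\norm{F_w^n e_i}\to\infty$ is equivalent to $\lim_{m\to\infty}L_m=+\infty$ and is therefore completely independent of $i$, so the partition cannot be produced from pointwise escape — its whole content is the uniformity. The route I would take is to show that (E) forces $L$ to be, up to a bounded and slowly varying error, unimodal. Indeed a genuine interior maximum of $L$ on a window of radius $N(\lambda)$ violates (E) at its top, and the Lipschitz bound rules out flat or boundedly oscillating tails; so at each end $L$ tends to $+\infty$ or to $-\infty$, with no interior peaks in between. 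One then sets $I_-$ to be the indices lying to the left of the bounded valley region where $L$ attains values near its infimum and $I_+$ the rest, degenerating to $I_+=\Z$ or $I_-=\Z$ in the two monotone cases. With this choice the uniform ascent supplied by (E) should propagate: to the right of the valley every index sees strictly higher values by $\lambda$ within $N(\lambda)$ steps and, there being no peaks to fall back from, $L_{i+n}-L_i$ grows past $\lambda$ uniformly once $n\gtrsim N(\lambda)$, which is exactly uniform forward escape on $I_+$; the left side is symmetric.

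The main obstacle is precisely this last passage: converting the local, window-bounded ascent of (E) into global uniform coercivity of $L$ on each side of the valley, while keeping track of the $\lambda$-dependent slack and of the possibly thick valley. I would make it rigorous by introducing the running infima $R_i=\inf_{m\ge i}L_m$ and $Q_i=\inf_{m\le i}L_m$, using (E) to show that on $I_+$ the quantity $L_i-R_i$ stays bounded while $R_{i+n}-R_i\to\infty$ uniformly (and symmetrically with $Q$ on $I_-$), and finally transferring these estimates back to $L_{i+n}-L_i$. The preliminary lemma that (E) together with the Lipschitz bound forces each one-sided limit of $L$ to be $\pm\infty$ with no window-local maxima is also part of this work, though it is routine once phrased in these terms. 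The remaining points — the harmless overlap of the two sides at the boundary of the valley, the independence of the partition from the auxiliary thresholds, and the bookkeeping between strict and non-strict inequalities — are then straightforward once the unimodal picture is in place.
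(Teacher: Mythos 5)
Your direction (ii)$\Rightarrow$(i) is correct and is essentially the paper's own argument specialised to $c_0$ and $\ell_p$: split a unit vector as $u=u_++u_-$ along the partition, observe that one part has norm at least $c$, and use disjointness of supports together with the uniform escape of the basis vectors; implicitly you are exhibiting the decomposition $A=\{u\in S_X:\norm{u_+}\geq c\}$, $B=\{u\in S_X:\norm{u_-}\geq c\}$ required by \Cref{def:UTE}, which is exactly what the paper does (abstractly, under unconditionality hypotheses) in \Cref{thm:UTEcharacterisation}.

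The genuine problem is your direction (i)$\Rightarrow$(ii). You misquote the definition: uniform topological expansivity in this paper (\Cref{def:UTE}) is \emph{not} the window-escape statement ``for every $M$ there is $N$ such that every unit vector $u$ satisfies $\max_{\abs{n}\leq N}\norm{F_w^n u}>M$''; it is by definition a decomposition of the unit sphere $S_X=A\cup B$ with $\norm{T^n x}\to\infty$ uniformly on $A$ and $\norm{T^{-n}x}\to\infty$ uniformly on $B$. With the actual definition, (i)$\Rightarrow$(ii) is nearly trivial, and this is how the paper proves it: set $I_+=\{i: e_i\in A\}$ and $I_-=\{i: e_i\in B\}$ (here $\norm{e_i}=1$, so $e_i\in S_X$), and uniform escape on $A$ and $B$ restricts to uniform escape of the basis vectors on $I_+$ and $I_-$. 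The window-escape condition is equivalent to the sphere decomposition on Banach spaces, but that equivalence is a theorem (it appears as a proposition in the Bernardes--Cirilo--Darji--Messaoudi--Pujals paper on expansivity and shadowing), not an ``unwinding'' of the definition.

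This misreading is not fatal to your logic, because your condition (E) for basis vectors does follow trivially from \Cref{def:UTE}; but it converts a four-line restriction argument into a strictly harder problem, namely proving (E)$\Rightarrow$(ii), i.e.\ that uniform expansivity can be detected from the window-local behaviour of the basis vectors alone. That is a stronger statement than the theorem asks for, and it is precisely the part of your proposal that is not a proof but a plan: the claims that (E) forces $L$ to be unimodal up to bounded error, that the valley region is bounded, and above all that the window-local ascent ``should propagate'' into the global uniform coercivity $\inf_{i\in I_+}(L_{i+n}-L_i)\to\infty$ are the entire substance of the implication, and none of them is established. Note in particular that (ii) concerns the values $L_{i+n}-L_i$ themselves, not running maxima over windows, so your argument must exclude, for instance, indices with arbitrarily long forward plateaus whose backward gains are only window-local --- exactly the bookkeeping you defer. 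As it stands, the main direction of your proof is incomplete, and all of this machinery becomes unnecessary once the correct definition of uniform topological expansivity is used.
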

	
	Finally, \Cref{sect:PSP} covers the periodic shadowing property. Since this section builds upon previous results from the literature (namely \cite{Shadowing_structural_stability, Shadowing_and_chain_rec}) which are only proved for the spaces $\ell_p$, $1 \leq p < \infty$ and $c_0$, we only focus on the backward weighted bilateral shift on these spaces. We show that if $B_w$ has a~periodic point (or equivalently a~dense set of periodic points) then the periodic shadowing and shadowing properties coincide. If it is not the case, we find a~condition which is equivalent to the periodic shadowing property on $c_0$ and we show that it is necessary on $\ell_p$. Since the formulations are quite technical, let us mention here the result for $c_0$:
	\begin{theorem*}[{\Cref{thm:PSPc0}}]
		Let $B_w$ be a~bilateral weighted backward shift on \(X = c_0(\Z)\) given by a~bounded sequence of weights \(w = (w_n)_{n \in \Z}\) with \(\inf_{n \in \Z}\abs{w_n} > 0\). If $B_w$ has nontrivial periodic points, then the periodic shadowing property is equivalent to the shadowing property. If not, the periodic shadowing property is equivalent to the condition:
		For every $\eps > 0$ there is $\delta > 0$ such that for all $k \leq l \leq m$ one of the following holds
		\begin{enumerate}
			\item[(a)] \begin{equation*}
				\abs{\frac{\varepsilon}{w(l+1)\dots w(m)}- \delta \sum_{i=0}^{m-(l+1)}\frac{1}{w(m) \dots w(m-i)}} \geq \delta
			\end{equation*}
			\item[(b)] \begin{equation*}
				\abs{w(k) \dots w(l)\varepsilon -\delta\sum_{i=0}^{k-(l-1)}w(k) \dots w(k-i)} \geq \delta.
			\end{equation*}
		\end{enumerate}
	\end{theorem*}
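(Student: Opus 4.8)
The plan is to split on the basic alternative built into the statement: whether $0$ is the only periodic point of $B_w$ or not. The cleanest observation, which drives the whole second case, is that if $B_w$ has no nontrivial periodic points then $0$ is its only periodic point, so in the definition of PSP the shadowing periodic point is forced to be $0$. Consequently PSP reduces to a pure smallness statement: for every $\varepsilon > 0$ there is $\delta > 0$ such that every periodic $\delta$-pseudotrajectory $(x_j)_{j \in \Z}$ satisfies $\sup_j \norm{x_j} < \varepsilon$. I would then write the pseudotrajectory condition coordinatewise, $\abs{w_{n+1}x_j(n+1) - x_{j+1}(n)} < \delta$, and track how large a single coordinate can be made. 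Since $B_w$ carries the value at position $n+1$ to position $n$ scaled by $w_{n+1}$, sustaining a value across a full period at a pivot position $l$ can only be achieved by amplification through weight products on one side of $l$ or by attenuation requiring division by weight products on the other side, while the injected $\delta$-errors accumulate as the geometric-type sums displayed in (a) and (b).

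To extract (a) and (b) I would make the extremal periodic pseudotrajectory explicit on a window $[k,m]$ with pivot $l$: set the target coordinate at $l$ equal to $\varepsilon$, propagate it by $B_w$ and by $B_w^{-1}$ across the window injecting maximal $\delta$-errors at each step, and then impose the periodic closing condition. The two conditions correspond to the two sides of the pivot and the two competing mechanisms; each says that, for the chosen $\delta$, the approximate period equation cannot be closed in that direction because a gap of at least $\delta$ remains. Requiring (a)$\vee$(b) for all $k \le l \le m$ is then exactly the obstruction to assembling a periodic $\delta$-pseudotrajectory with a coordinate reaching $\varepsilon$. Both implications run through this correspondence: a violation of the condition yields an explicit periodic $\delta$-pseudotrajectory that is not uniformly $\varepsilon$-small, hence not shadowed by $0$, and conversely the condition bounds $\sup_j \norm{x_j}$ by $\varepsilon$ for every periodic $\delta$-pseudotrajectory.

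For the case where $B_w$ has nontrivial periodic points I would prove PSP $\iff$ PS. The direction PS $\Rightarrow$ PSP is a closing-lemma argument: by the characterizations of the cited works PS is equivalent to hyperbolicity of $B_w$, so $B_w^p - I$ is invertible; given a period-$p$ pseudotrajectory that is $\tfrac{\varepsilon}{2}$-shadowed by some $x$, the corrected point $y = x - (B_w^p - I)^{-1}(B_w^p x - x)$ satisfies $B_w^p y = y$, and the hyperbolic splitting controls $\norm{B_w^j y - B_w^j x}$ for $j = 0, \dots, p-1$, which by periodicity of both $x_j$ and $B_w^j y$ suffices to conclude that $y$ still $\varepsilon$-shadows. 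For the converse PSP $\Rightarrow$ PS I would argue contrapositively and reduce PSP to finite shadowing, using a nontrivial periodic point to concatenate an obstructing finite pseudotrajectory into a genuinely periodic one with jumps still below $\delta$, and then invoke the known equivalence of finite shadowing with PS for $B_w$.

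I expect the principal obstacle to be the sharpness of the extremal analysis in the second case: proving that the windowed single-coordinate pseudotrajectories are genuinely worst-case, correctly accounting for the accumulated $\delta$-errors as the sums in (a) and (b), and matching the periodic closing condition to the displayed inequalities so that both directions of the equivalence follow. A secondary subtlety is the conversion step in PSP $\Rightarrow$ PS, namely verifying that the nontrivial periodic point really allows one to close finite pseudotrajectories into periodic ones while keeping the jumps controlled and that the resulting periodic pseudotrajectory admits no periodic $\varepsilon$-shadow.
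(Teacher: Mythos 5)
Your treatment of the trivial-periodic-point case follows the same strategy as the paper: reduce PSP, in the absence of nontrivial periodic points, to the statement that every periodic $\delta$-pseudotrajectory stays in $B(0,\eps)$ (this is the paper's discussion before \Cref{ex:TrivCRbutNotPSP} and \Cref{prop:PdPTstart}), obtain necessity of (a)$\vee$(b) from an explicit extremal construction (the paper does this in \Cref{thm:PSPabstract} with the shrinking map $S_\delta x = (1-\delta/\norm{x})x$ propagated by $B_w^{\pm 1}$ from $\eps e_l$), and obtain sufficiency by arguing that single-coordinate pseudotrajectories are worst-case. But the sufficiency direction is the only part of the statement not already contained in \Cref{thm:PSPabstract}, and it is exactly the part you leave unproved: you name ``proving that the windowed single-coordinate pseudotrajectories are genuinely worst-case'' as the principal obstacle and offer no argument for it. The paper's proof of this step is the real content of \Cref{thm:PSPc0}: given a periodic $\delta$-pseudotrajectory $(x_i)$ starting at $0$ that escapes $B(0,\eps)$, one picks a coordinate $l$ with $\abs{x_r(l)} \geq \eps$ (here $c_0$ enters essentially -- a large sup norm forces a large single coordinate, which fails in $\ell_p$ and is precisely why the paper can only prove this direction on $c_0$ and poses the $\ell_p$ case as an open question), runs the maximally shrinking comparison sequence $y_0 = \eps e_l$, $y_i = (1-\delta/\norm{y_{i-1}})B_w^{-1}y_{i-1}$, and shows by a maximality argument, using that $B_w^{-1}$ preserves the pointwise order, that $\norm{y_p} < \delta$ for some $p \leq r$; unwinding $y_p$ then exhibits the failure of (a) with $m = l+p$ (and a symmetric construction gives (b)). Without this comparison lemma you have a plan, not a proof.

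In the nontrivial-periodic-point case there are two further problems. Your proposed proof of PS $\Rightarrow$ PSP is broken: shadowing for invertible weighted shifts on $c_0$ is equivalent to \emph{generalized} hyperbolicity, not hyperbolicity, and in the case you are treating the operator cannot be hyperbolic at all, since a nontrivial periodic point of period $p$ is exactly a nonzero element of $\ker(B_w^p - I)$; hence $B_w^p - I$ is \emph{not} invertible for the relevant $p$ and your corrected point $y = x - (B_w^p - I)^{-1}(B_w^p x - x)$ does not exist. (The paper avoids this entirely by citing the implication PS $\Rightarrow$ PSP from the literature, where it is proved for $\ell_p$ and $c_0$ with no hyperbolicity assumption.) Second, your PSP $\Rightarrow$ PS argument requires periodic points to be \emph{dense}, not merely to exist: closing an arbitrary finite pseudotrajectory into a periodic one with jumps below $\delta$ needs a periodic point near each relevant entry of the pseudotrajectory, as in \Cref{prop:PSPimpliesFSP}. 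Density does follow from existence by the dichotomy of \Cref{periodic points are dense} (or \Cref{rem:densePP}), but this step is a genuine ingredient and your sketch, which speaks only of ``a nontrivial periodic point,'' glosses over it.
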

	
	To end this section, let us fix some notation for the paper. Given a~locally convex space \(X\) we will denote $B(x, r)$ the open ball around $x \in X$ with radius $r > 0$ and $\GL(X)$ the space of invertible linear operators on $X$.
	
	\begin{acknowledgement}
		We would like to thank D.~Darji for his series of lectures at the Charles University that motivated this work and the Czech Fulbright Commission which made his stay possible.
	\end{acknowledgement}

	\section{Generalized hyperbolicity does not imply shadowing} \label{sect:GenHypAndShadowing}
	
	In this section, we answer~\cite[Problem~C]{Darji} in the negative. Recall the definition of generalized hyperbolic operators which was introduced for locally convex spaces in~\cite{Darji}:
	\begin{definition}
		Let $X$ be a~locally convex space over $\K$ whose topology is induced by a~directed family $(\norm{\cdot}_\alpha)_{\alpha \in I}$ of seminorms and $T: X \to X$. We say that $T$ is \emph{generalized hyperbolic} if $X$ can be written as a~topological direct sum $X = M \oplus N$ of two subspaces $M, N$ with the following three properties:
		\begin{enumerate}
			\item $T(M) \subset M$;
			\item $T(N) \supset N$ and $T|_N: N \to T(N)$ is an~isomorphism;
			\item for every $\alpha \in I$ there are $\beta \in I$, $c > 0$ and $t \in (0,1)$ such that
			\begin{equation*}
				\forall y \in M, z \in N \ \forall n \in \N_0 \colon \norm{T^n y}_\alpha \leq ct^n \norm{y}_\beta \text{ and } \norm{S^n z}_\alpha \leq ct^n\norm{z}_\beta,
			\end{equation*}
			where $S = (T|_N)^{-1}$.
		\end{enumerate}
	\end{definition}
	
	Shortly after giving this definition, the authors proved the following result.
	\begin{theorem}[{\cite[Theorem~6]{Darji}}]
		Let $X$ be a~locally convex space whose topology is generated by a~directed family $(\norm{\cdot}_\alpha)_{\alpha \in I}$ of complete seminorms. Then every generalized hyperbolic invertible $T: X \to X$ satisfying $T(\ker(\norm{\cdot})_\alpha) = \ker(\norm{\cdot}_\alpha)$ for all $\alpha \in I$ has the shadowing property.
	\end{theorem}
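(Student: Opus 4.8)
The plan is to adapt the classical hyperbolic-splitting construction of a shadowing point to the locally convex setting, exploiting that the shadowing condition only ever has to be verified for one generating seminorm at a time. Fix a target neighbourhood, which by directedness of the family we may take to be $V = \{x : \norm{x}_\alpha < \eta\}$ for a single $\alpha \in I$ and some $\eta > 0$, and let $\beta \in I$, $c>0$, $t\in(0,1)$ be the data provided by property (iii) for this $\alpha$ (enlarging $\beta$ via directedness so that $\norm{\cdot}_\alpha \leq \norm{\cdot}_\beta$). Given a $U$-pseudotrajectory $(x_j)_{j\in\Z}$, with $U$ to be chosen, set $e_j = T x_j - x_{j+1}$ and split $e_j = a_j + b_j$ with $a_j = P_M e_j \in M$ and $b_j = P_N e_j \in N$, using the continuous projections coming from the topological direct sum $X = M \oplus N$.

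First I would write down the candidate shadowing point. If $x$ is to shadow $(x_j)$, the error $z_j := x_j - T^j x$ must satisfy the inhomogeneous recurrence $z_{j+1} = T z_j - e_j$, so I look for its bounded solution split along $M$ (summed over the past) and $N$ (summed over the future), namely
\begin{align*}
	u_j &= -\sum_{m=0}^{\infty} T^m a_{j-1-m}, &
	v_j &= \sum_{m=0}^{\infty} S^{m+1} b_{j+m},
\end{align*}
where $S = (T|_N)^{-1}$. Crucially, the purely algebraic identities needed for shadowing require no infinite manipulation: defining $u_0,v_0$ as the limits of the $m=0,\dots,L$ partial sums and then putting $u_j := T^j u_0 - \sum_{i=0}^{j-1}T^{j-1-i}a_i$ (and analogously for $v_j$, extended to $j<0$ through invertibility of $T$), one checks by finite algebra that $u_{j+1} = T u_j - a_j$ and $v_{j+1} = T v_j - b_j$, hence $z_j := u_j + v_j$ solves $z_{j+1} = T z_j - e_j$. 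Setting $x := x_0 - z_0$, a one-line two-sided induction using $e_j = T x_j - x_{j+1}$ gives $x_j - T^j x = z_j$ for all $j \in \Z$. The quantitative bound then follows termwise from property (iii): since $a_{j-1-m}\in M$ and $b_{j+m}\in N$,
\begin{equation*}
	\norm{u_j}_\alpha \leq \sum_{m=0}^\infty c t^m \norm{a_{j-1-m}}_\beta \leq \frac{c}{1-t}\sup_k \norm{a_k}_\beta, \qquad
	\norm{v_j}_\alpha \leq \frac{ct}{1-t}\sup_k\norm{b_k}_\beta.
\end{equation*}
As $P_M,P_N$ are continuous there are $\gamma \in I$ and $C>0$ with $\norm{P_M\,\cdot\,}_\beta,\norm{P_N\,\cdot\,}_\beta \leq C\norm{\cdot}_\gamma$; choosing $U = \{x : \norm{x}_\gamma < \delta\}$ forces $\sup_k\norm{a_k}_\beta,\sup_k\norm{b_k}_\beta \leq C\delta$, whence $\norm{z_j}_\alpha \leq \frac{c(1+t)}{1-t}C\delta < \eta$ for $\delta$ small, which is exactly $V$-shadowing by $x$.

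The main obstacle is not this formal scheme but the two places where it quietly asks for more than an arbitrary locally convex space gives. First, the series defining $u_0$ and $v_0$ must actually converge to points of $X$; this is where completeness of the seminorms enters, for the partial sums are Cauchy with geometric rate in $\norm{\cdot}_\beta$ (apply property (iii) once more at $\beta$ to bound $\norm{T^m a_{j-1-m}}_\beta$, and correspondingly through $S$ for $v_0$), so a complete $\norm{\cdot}_\beta$ delivers limits $u_0,v_0 \in X$ and hence a genuine point $x$. Second, and harder, the estimate above presupposes that the element $u_j$ produced by the finite-algebra definition agrees in $\norm{\cdot}_\alpha$ with its series $-\sum_{m\geq 0}T^m a_{j-1-m}$; unwinding this, it amounts to showing $T^j u_0 = -\sum_{m\geq 0}T^{j+m}a_{-1-m}$ in $\norm{\cdot}_\alpha$, i.e. that the uniform bound $\norm{T^j y}_\alpha \leq ct^j\norm{y}_\beta$ of property (iii) may be applied to the $\norm{\cdot}_\beta$-tails $r_L = u_0 + \sum_{m=0}^{L}T^m a_{-1-m}$, which lie a priori only in the $\norm{\cdot}_\beta$-closure of $M$ rather than in $M$ itself.

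This last point is exactly where the hypothesis $T(\ker\norm{\cdot}_\alpha)=\ker\norm{\cdot}_\alpha$ does its work, and I would organise the argument around it. Writing $u_0 = m_0 + \kappa$ with $m_0 \in M$ and $\kappa \in \ker\norm{\cdot}_\beta \subseteq \ker\norm{\cdot}_\alpha$, the kernel condition gives $T^j\kappa \in T^j(\ker\norm{\cdot}_\alpha)=\ker\norm{\cdot}_\alpha$, so $\norm{T^j\kappa}_\alpha = 0$ and the choice of $\norm{\cdot}_\beta$-representative of the limit is invisible to the $\norm{\cdot}_\alpha$-estimate; combined with property (iii) on the genuinely-in-$M$ part and the $\norm{\cdot}_\beta\to 0$ decay of $r_L - \kappa$, this closes the commutation gap and yields $\norm{T^j r_L}_\alpha \to 0$. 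In effect the kernel condition makes $T$ and $T^{-1}$ descend to well-defined invertible operators on each quotient $X/\ker\norm{\cdot}_\alpha$ (which completeness turns into a Banach space), so that all seminorm-level bookkeeping is consistent and the whole construction can be carried out modulo kernels without ambiguity. I would therefore prove the two auxiliary facts — convergence of the series via completeness, and $\norm{\cdot}_\alpha$-invisibility of $\ker\norm{\cdot}_\beta$ under powers of $T$ via the kernel condition — and treat the remaining verifications (the finite-algebra recurrences, the two-sided induction, and the symmetric $N$-side analysis through $S$) as routine bookkeeping.
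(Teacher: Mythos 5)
First, a remark on the comparison itself: the paper does not prove this statement — it is quoted verbatim from \cite[Theorem~6]{Darji} as background for Problem~C — so your proposal can only be judged on its own merits, not against an in-paper argument.

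Your skeleton is the standard and correct one: reduce to a single target seminorm by directedness, split the errors $e_j = Tx_j - x_{j+1}$ with the continuous projections $P_M, P_N$, sum past errors through $T$ on $M$ and future errors through $S$ on $N$, obtain the recurrences $u_{j+1}=Tu_j-a_j$, $v_{j+1}=Tv_j-b_j$ by finite algebra, and estimate termwise via property (iii). You have also correctly located the genuine difficulty: a limit taken with respect to the single seminorm $\norm{\cdot}_\beta$ cannot be pushed through $T^j$, since continuity of $T$ is only with respect to the full locally convex topology. However, your resolution of that difficulty contains a real gap. You write $u_0 = m_0 + \kappa$ with $m_0 \in M$ and $\kappa \in \ker \norm{\cdot}_\beta$; this decomposition need not exist. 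It asserts exactly that the $\norm{\cdot}_\beta$-limit of a $\norm{\cdot}_\beta$-Cauchy sequence from $M$ lies in $M + \ker\norm{\cdot}_\beta$, i.e.\ that $M + \ker\norm{\cdot}_\beta$ contains the $\norm{\cdot}_\beta$-closure of $M$. The hypotheses only give that $M$ is closed in the topology of $X$ (it is a topological direct summand), and the $\norm{\cdot}_\beta$-topology is \emph{coarser} than that topology, so the $\norm{\cdot}_\beta$-closure of $M$ can be strictly larger than $M + \ker\norm{\cdot}_\beta$; neither generalized hyperbolicity, nor completeness of the seminorms, nor the kernel condition rules this out. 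The same objection applies to $v_0$ and $N$. Relatedly, your closing claim that the kernel condition makes $T^{\pm1}$ descend to the quotients $X/\ker\norm{\cdot}_\alpha$ ``so that all seminorm-level bookkeeping is consistent'' is too quick: the induced maps are well defined and bijective, but nothing in the hypotheses makes them \emph{bounded} on the quotient Banach space, and boundedness there is precisely the commutation of $T$ with single-seminorm limits that you are trying to establish — so this reformulation is circular rather than a proof.

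The gap is repairable, but by a different mechanism, which is presumably how the cited source proceeds: give up on the exact recurrence. Define each $z_j$ separately as a limit of its own partial sums $\sigma_j^L = -\sum_{m=0}^{L} T^m a_{j-1-m} + \sum_{m=0}^{L} S^{m+1} b_{j+m}$, taking the limit with respect to a seminorm $\norm{\cdot}_\rho$ which, by directedness, dominates $\norm{\cdot}_\alpha$, $\norm{\cdot}_\beta$, \emph{and} a seminorm $\norm{\cdot}_\tau$ satisfying $\norm{Tx}_\alpha \leq C\norm{x}_\tau$ for all $x \in X$ (continuity of $T$ — a single application, which is all one can afford); the partial sums are $\norm{\cdot}_\rho$-Cauchy once $U$ is a ball in a seminorm that also controls the errors in the seminorm which property (iii) pairs with $\rho$, and completeness of $\norm{\cdot}_\rho$ provides the limits. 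One then checks $\norm{z_{j+1} - Tz_j + e_j}_\alpha = 0$: the tail $T^{L+1}a_{j-L-1} + S^{L+1}b_{j+L+1}$ is killed by (iii), the term $z_{j+1}-\sigma_{j+1}^L$ by $\rho \geq \alpha$, and the term $T(z_j - \sigma_j^L)$ by $\rho \geq \tau$. Hence $y_j := x_j - z_j$ satisfies $y_{j+1} - Ty_j \in \ker\norm{\cdot}_\alpha$ for all $j$, and this is where the kernel hypothesis does its actual work: $T^{\pm1}(\ker\norm{\cdot}_\alpha) = \ker\norm{\cdot}_\alpha$ propagates these kernel errors invisibly under all powers of $T$, giving $y_j - T^j y_0 \in \ker\norm{\cdot}_\alpha$ and therefore $\norm{x_j - T^j y_0}_\alpha = \norm{z_j}_\alpha < \eta$ for all $j \in \Z$, so $x = x_0 - z_0$ shadows. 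In short: the kernel condition is not a tool for splitting single-seminorm limits back into $M \oplus \ker$ (which is false in general), but a tool for making an ``almost-trajectory with errors in the kernel'' indistinguishable, in $\norm{\cdot}_\alpha$, from a true trajectory.
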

	
	In~\cite[Problem~C]{Darji}, they ask if it is possible to remove the assumption $T(\ker(\norm{\cdot})_\alpha) = \ker(\norm{\cdot}_\alpha)$. The following example shows that the answer is \emph{no}.
	
	\begin{example} \label{ex:GenHypNotShadowing}
		Let $X = c_{00}(\Z)$ be equipped with the directed family $\norm{\cdot}_n, n \in \N$ of complete seminorms defined by
		\begin{equation*}
			\norm{x}_n = \max_{\abs{k} \leq n} \abs{x(k)}, \quad x \in X, n \in \N.
		\end{equation*}
		Define $T \in GL(X)$ to be the bilateral weighted backward shift given by weights $w(k) = 2, k > 0$ and $w(k) = 1/2, k \leq 0$, i.e.
		\begin{equation*}
			T((x_n)_{n \in \Z})_n = 
			\begin{cases}
				2 x_{n+1}, \quad n \geq 0 \\
				\frac{1}{2} x_{n+1}, \quad n < 0.
			\end{cases}
		\end{equation*}
		Then $T$ is generalized hyperbolic, but does not posses the (positive) shadowing property.
	\end{example}
	\begin{proof}
		The maps $\norm{\cdot}_n, n \in \N$ are clearly seminorms on $c_{00}$ and they are easily seen to be complete: if $(x_k)_{k \in \N}$ is Cauchy with respect to $\norm{\cdot}_n$ for some $n \in \N$, then for $-n \leq i \leq n$ surely $x_k(i) \to x(i)$ for some $x(i)$. If we define $x(i) = 0, \abs{i} > n$, then $x_k \to x$ in \(\norm{\cdot}_n\).
		
		It is standard to check that $T \in GL(X)$ and that it is generalized hyperbolic operator with the decomposition of $X$ given by $M = \cllinsp\{e_n : n \leq 0\}$ and $N = \cllinsp\{e_n : n > 0\}$.
		
		We need to find a~neighbourhood $V$ of $0$ so that for all neighbourhoods $U$ of $0$ there is a~$U$-chain which is not $V$-shadowed. In fact, we will show that $V$ can be chosen as $V = \{x \in X : \norm{x}_n < \eps\}$ for any $\eps > 0$ and $n \in \N$. So, choose any such $\eps, n$ and let a~neighbourhood $U$ of $0$ be given. Then there are $m \in \N$ and $\delta > 0$ for which $\{x \in X : \norm{x}_m < \delta\} \subset U$. Define $y \in c_0(\Z)$ by $y(i) = \eps 2^{-\abs{i}}$, $i \in \Z$ and $x_j \in c_{00}$, $j \in \N_0$ by
		\begin{equation*}
			x_j(i) = \begin{cases}
				y(i), &m+1-j \leq i \leq m+1,\\
				0, &\text{otherwise}.
			\end{cases}
		\end{equation*}
		Then, directly from the definitions of $x_j$'s and $T$ we see that for all $j \in \N$ we have $x_{j+1} - Tx_j = \eps2^{-m-1} e_{m+1} \in U$, i.e. $(x_j)_{j \in \N_0}$ is a~$U$-pseudotrajectory.
		
		Assume that there is some $x \in c_{00}$ which $V$-shadows $(x_j)_{j\in\N_0}$. Since $x \in c_{00}$, there is $k > m+1$ such that $x(k) = 0$. However, as $x$ shadows $(x_j)_{j\in\N}$, $x_k - T^kx \in V$ and thus
		\begin{equation*}
			\eps > \abs{x_k(0) - T^kx(0)} = \abs{y(0) - 2^k x(k)} = \eps2^0 - 0 = \eps
		\end{equation*}
		which is a~contradiction. This shows that $T$ does not have the positive shadowing property, but, of course, since $x_0 \in U$, we may define $x_j, j < 0$ simply by $x_j = 0$ to see that the full shadowing property also fails.
	\end{proof}
	
	We have two final remarks regarding the example and its proof.
	\begin{remark}
		The spirit of the example is to exploit the ``ugliness'' of the chosen space - in particular the fact that it is not sequentially complete. One may as well take $T$ to be an~operator on, and $(x_j)$ to be a~subsequence of, the space $c_0$. In that case, $(x_j)$ would be shadowed exactly by the point $y$ as defined in the proof. Hence one may equally correctly view the construction as taking a~pseudotrajectory which is shadowed in a~``nice'' space and then restricting ourselves to a~subspace which forgets this point.
	\end{remark}
	
	\begin{remark}
		In the proof, for simplicity's sake we showed that $V$ may be taken as any subbasic neighbourhood of $0$. One may in fact take $V$ to be any nontrivial neighbourhood of $0$. We sketch how the proof would need to be modified: if $\{x \in X : \norm{x}_m < \delta\} \subset U$ as in the proof and $z \notin V$, use the fact that $z$ is finitely supported to find $l \in \N$ so that $T^{-l}z \in \linsp \{e_i : i > m+1\}$. The sequence would then be constructed by starting with $T^{-l}z$ and then repeatedly applying these steps
		\begin{enumerate}
			\item $\abs{\supp z}$ times creating the next element by simply applying $T$,
			\item adding $T^{-l}z$ to the last element of the partially constructed sequence.
		\end{enumerate}
	\end{remark}
	
	\section{Characterisation of uniform topological expansivity} \label{sect:UTE}
	
	The aim of this section is to give a~(partial) answer to~\cite[Problem~E]{Darji}. The problem asks us to characterise the concept of uniform topological expansivity for weighted shifts on Fr\'echet sequence spaces. We recall the relevant definition introduced in~\cite{Darji}:
	\begin{definition} \label{def:UTE}
		Let $X$ be a~locally convex space over $\K$ whose topology is induced by a~directed family $(\norm{\cdot}_\alpha)_{\alpha \in I}$ of seminorms. We say that an~operator $T \in GL(X)$ is uniformly topologically expansive if for every $\alpha \in I$ exists $\beta \in I$ such that we can decompose the sphere $S_\alpha = \{x \in X : \norm{x}_\alpha = 1\}$ into two sets $S_\alpha = A_\alpha \cup B_\alpha$, where
		\begin{align*}
			\norm{T^n x}_\beta &\uniTo \infty \ \text{uniformly on} \ A_\alpha \ \text{as} \ n \to \infty,\\
			\norm{T^{-n} x}_\beta &\uniTo \infty \ \text{uniformly on} \ B_\alpha \ \text{as} \ n \to \infty.
		\end{align*}
	\end{definition}
	In \cite{Darji} the authors give the following characterisation of expansivity for weighted shifts, for details and undefined notions, see \cite{Darji}.
	\begin{theorem}[{\cite[Theorem 42]{Darji}}]
		Suppose that \(X\) is a Fr\'echet sequence space over \(\Z\) in which the sequence \((e_n)_{n \in \Z}\) of canonical vectors is a basis, \((\norm{\cdot})_{k \in \N}\) is an increasing sequence of seminorms that induces the topology of \(X\), and the bilateral weighted forward shift \(F_w\) is an invertible operator on \(X\). Then the following conditions are equivalent: 
		\begin{enumerate}
			\item \(F_w : X \to X\) is topologically expansive;
			\item there exists \(k \in \N\) such that
			\begin{enumerate}
				\item \(\sup_{n \in \N}\abs{w_1 \cdot \dots \cdot w_n}\norm{e_{n+1}}_k = \infty\) or
				\item \(\sup_{n \in \N}\abs{w_{-n+1}\cdot \dots \cdot w_0}^{-1}\norm{e_{-n+1}}_k = \infty\)
			\end{enumerate}
			\item \begin{enumerate}
				\item \(F_w: X \to X\) is topologically positively expansive or
				\item \(F^{-1}_w : X \to X\) is topologically positively expansive.
			\end{enumerate}
		\end{enumerate}
	\end{theorem}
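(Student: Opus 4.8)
The plan is to prove the three conditions equivalent through the cycle $(3)\Rightarrow(1)\Rightarrow(2)\Rightarrow(3)$, reading topological (positive) expansivity as the non\nobreakdash-uniform analogue of \Cref{def:UTE} in which ``$\norm{T^n x}_\beta\to\infty$'' means that the orbit is \emph{unbounded} in $\norm{\cdot}_\beta$, i.e. $\sup_n\norm{T^n x}_\beta=\infty$; this is the reading that makes condition (2), visibly a supremum condition, match the dynamics. First I would record two reductions. Since $F_w^n(e_1)=w_1\cdots w_n\,e_{n+1}$ and $F_w^{-n}(e_1)=(w_0\cdots w_{-n+1})^{-1}e_{-n+1}$, the two alternatives in (2) say precisely that the forward, respectively backward, orbit of $e_1$ is unbounded in $\norm{\cdot}_k$. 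Moreover, invertibility of $F_w$ (all weights nonzero) gives $e_i=c_i F_w^{\,i-1}(e_1)$ for a nonzero scalar $c_i$, so $F_w^m(e_i)$ is a fixed nonzero multiple of $F_w^{m+i-1}(e_1)$; hence the forward (resp. backward) orbit of $e_1$ is unbounded in $\norm{\cdot}_k$ if and only if the same holds for every $e_i$, with the same $k$. Finally, as the seminorms are increasing, ``$\exists\beta$'' may always be taken with $\beta\ge\alpha$, and a single index $k$ suffices in (2).

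The two easy implications come next. For $(3)\Rightarrow(1)$: if $F_w$ is topologically positively expansive, then for each $\alpha$ the witnessing $\beta$ lets us take the trivial decomposition $A_\alpha=S_\alpha$, $B_\alpha=\emptyset$; if instead $F_w^{-1}$ is positively expansive, take $B_\alpha=S_\alpha$, using $\norm{(F_w^{-1})^n x}_\beta=\norm{F_w^{-n}x}_\beta$. Either way $F_w$ is topologically expansive. For $(1)\Rightarrow(2)$: since $X$ is Hausdorff and $e_1\ne 0$, choose $\alpha$ with $\norm{e_1}_\alpha>0$ and apply expansivity to $e_1/\norm{e_1}_\alpha\in S_\alpha$, obtaining some $\beta$. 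The normalized $e_1$ lies in $A_\alpha$ or $B_\alpha$, so its forward or backward orbit is unbounded in $\norm{\cdot}_\beta$, which by the first reduction is exactly the first or second alternative of (2) with $k=\beta$.

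The substance is $(2)\Rightarrow(3)$; say the first alternative of (2) holds with index $k$ (the second is identical after replacing $F_w$ by $F_w^{-1}$). Fix $\alpha$, set $\beta=\max\{\alpha,k\}$, and aim to show $\sup_n\norm{F_w^n x}_\beta=\infty$ for every $x\in S_\alpha$. Given such $x\ne 0$, pick $i_0$ with $x_{i_0}\ne 0$; under $F_w^n$ the coordinate in position $i_0+n$ receives a contribution only from the original coordinate $i_0$, namely $e_{i_0+n}^*(F_w^n x)=x_{i_0}\,w_{i_0}\cdots w_{i_0+n-1}$. The goal is the lower bound
\begin{equation*}
	\norm{F_w^n x}_\beta \;\ge\; c\,\bigl|x_{i_0}\bigr|\,\bigl|w_{i_0}\cdots w_{i_0+n-1}\bigr|\,\norm{e_{i_0+n}}_\beta \;=\; c\,\bigl|x_{i_0}\bigr|\,\norm{F_w^n(e_{i_0})}_\beta ,
\end{equation*}
whose right-hand side has infinite supremum in $n$ by the second reduction (as $\beta\ge k$).

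The main obstacle is establishing this bound uniformly in the \emph{moving} coordinate $i_0+n$: it amounts to dominating a single coordinate of a vector by its $\beta$\nobreakdash-seminorm with a constant $c$ independent of the index, which is exactly what the basis hypothesis on $(e_n)_{n\in\Z}$ is meant to supply. I would extract it from the continuity and basis-constant estimates for the coordinate functionals and the block projections $y\mapsto\sum_{a\le j\le b}e_j^*(y)e_j$, together with the monotonicity of the seminorms; this bookkeeping, and the precise hypotheses guaranteeing it, is the part I would defer to \cite{Darji}. Once the displayed bound is in hand, $\sup_n\norm{F_w^n x}_\beta=\infty$ for all $x\in S_\alpha$, so $F_w$ is topologically positively expansive, closing the cycle.
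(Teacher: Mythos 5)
A preliminary but important point: the paper does not prove this statement at all --- it is quoted as \cite[Theorem~42]{Darji} and used as background --- so there is no in-paper proof to compare yours against; your attempt can only be measured against the cited source and on its own merits. On those merits, your cycle $(iii)\Rightarrow(i)\Rightarrow(ii)\Rightarrow(iii)$ is sound and is the natural argument: your reading of topological (positive) expansivity as an unboundedness condition on orbits over $S_\alpha$ is the one under which the theorem is true, the reduction identifying the two alternatives in (ii) with unboundedness of the forward/backward orbit of $e_1$ (and hence, via $e_i=c_iF_w^{i-1}(e_1)$ and the fact that a tail of a finite-valued sequence with infinite supremum still has infinite supremum, of every $e_i$) is correct, and $(iii)\Rightarrow(i)$, $(i)\Rightarrow(ii)$ are exactly as routine as you present them. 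Two remarks on the step you defer in $(ii)\Rightarrow(iii)$. First, your displayed lower bound cannot in general hold with the same index $\beta$ on both sides; what the basis hypothesis actually yields is this: a Fr\'echet space is barrelled, the block projections $y\mapsto\sum_{-m\le j\le n}y(j)e_j$ converge pointwise to the identity, hence by Banach--Steinhaus they are equicontinuous, so for every $\beta$ there exist $\gamma\ge\beta$ and $C>0$ with $\abs{y(j)}\norm{e_j}_\beta\le C\norm{y}_\gamma$ for all $j\in\Z$ and $y\in X$. Applying this with $y=F_w^nx$, $j=i_0+n$ and $\beta=\max\{\alpha,k\}$ gives $\norm{F_w^nx}_\gamma\ge C^{-1}\abs{x_{i_0}}\abs{w_{i_0}\cdots w_{i_0+n-1}}\norm{e_{i_0+n}}_\beta$, and one then takes $\gamma$ (not $\beta$) as the witnessing seminorm for positive expansivity; since $\gamma$ depends only on $\beta$, hence only on $\alpha$ and $k$ and not on $x$, this closes the argument. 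Your own prose (``block projections'', ``monotonicity of the seminorms'') anticipates precisely this fix, so I would classify it as a correctly flagged standard lemma rather than a gap --- but it is the crux of $(ii)\Rightarrow(iii)$, and it follows from Banach--Steinhaus rather than from anything specific to \cite{Darji}, so it deserves to be written out rather than deferred to the reference.
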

	
	To provide a~reasonable equivalent condition for uniform topological expansivity we need to put additional assumptions on the ambient space and its seminorms. However, later in the section we will verify that these conditions are satisfied for all types of spaces the authors of~\cite{Darji} mention, so we deem the assumptions to be not too restrictive.
	
	Similar result for uniform expansivity of weighted shifts on K\"othe spaces was obtained independently in~\cite[Theorem~11]{bernardes2025notionsexpansivityoperatorslocally}.
	
	Recall that the basis $(e_n)_{n\in\Z}$ is unconditional if the family of projections $\{P_I : I \subset \Z\}$, where
	\begin{equation*}
		P_I: x \mapsto x|_I, \text{ and } x|_I(n) = \begin{cases}
			x(n), &n \in I,\\
			0, &n \notin I,
		\end{cases}
	\end{equation*}
	is uniformly bounded.
	
	\begin{theorem}
		\label{thm:UTEcharacterisation}
		Suppose that \(X\) is a~Fréchet sequence space over \(\Z\) in which the sequence \((e_n)_{n \in \Z}\) of canonical vectors is an~unconditional basis, \((\norm{\cdot}_k)_{k \in \N}\) is an increasing sequence of seminorms that induces the topology of \(X\) and the bilateral weighted forward shift \(F_w\) is an invertible operator on \(X\). Furthermore, suppose that the following holds 
		\begin{enumerate}[label=(\arabic*)]
			\item for every $x \in X$ and \(a \in \K^\Z\) if \((a(i)x(i))_{i \in \Z} \in X\), then for every \(k \in \N\) we have \(\norm{(a(i)x(i))}_k \geq \left(\inf_{i \in \Z} |a(i)|\right) \norm{x}_k\),
			\item for every \(k, l \in \N\) there is \(c > 0\) such that for every \(n \in \Z\) and \(x \in \cllinsp\{e_i, \norm{e_i}_k \neq 0\}\) we have
			\[\norm{\sum_{i\in\Z}x_i e_{i+n}}_l \geq c  \norm{\sum_{i\in\Z} x_i \frac{\norm{e_{i+n}}_l}{\norm{e_i}_k} e_i}_k.\]
		\end{enumerate}
		Then the following are equivalent 
		\begin{enumerate}
			\item \(F_w\) is uniformly topologically expansive.
			\item For every \(k \in \N\) there is \(l \in \N\) and a decomposition \(I_{+}\) and \(I_{-}\) such that \(I_{+} \cup I_{-} = \{i \in \Z \colon \norm{e_i}_{k} \neq 0\}\) and 
			\begin{align*}
				\abs{w_{i+n-1} \cdots w_{i}}\frac{\norm{e_{i+n}}_l}{\norm{e_i}_k} = \norm{F_w^n\left(\frac{e_i}{\norm{e_i}_k}\right)}_l &\to \infty \ \text{uniformly on} \ I_+ \ \text{as} \ n \to \infty, \\
				\abs{w_{i-n} \cdots w_{i-1}}^{-1}\frac{\norm{e_{i-n}}_l}{\norm{e_i}_k} = \norm{F_w^{-n}\left(\frac{e_i}{\norm{e_i}_k}\right)}_l &\to \infty \ \text{uniformly on} \ I_- \ \text{as} \ n \to \infty.
			\end{align*}
		\end{enumerate}
	\end{theorem}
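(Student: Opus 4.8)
The plan is to prove the two implications separately; the forward one is almost immediate, while the reverse one carries all the content. For \((1)\Rightarrow(2)\) I would restrict uniform expansivity to the normalized canonical vectors. Fix \(k \in \N\) and let \(l\) (in the role of \(\beta\) from \Cref{def:UTE}) together with a decomposition \(S_k = A_k \cup B_k\) witness uniform topological expansivity. For each \(i\) with \(\norm{e_i}_k \neq 0\) the vector \(e_i/\norm{e_i}_k\) lies on \(S_k\), hence belongs to \(A_k\) or to \(B_k\); placing such \(i\) into \(I_+\) in the former case and into \(I_-\) in the latter decomposes \(\{i : \norm{e_i}_k \neq 0\}\). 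Since \(\norm{F_w^{n}(e_i/\norm{e_i}_k)}_l = \abs{w_{i+n-1}\cdots w_i}\norm{e_{i+n}}_l/\norm{e_i}_k\) and analogously for \(F_w^{-n}\), the two uniform limits in (2) are exactly the restriction of the uniform limits on \(A_k, B_k\) to these subfamilies.

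For \((2)\Rightarrow(1)\), fix \(k\), take \(l\), \(I_+\), \(I_-\) from (2), and write \(K = \{i : \norm{e_i}_k \neq 0\} = I_+ \cup I_-\). I would split the sphere by declaring \(x \in A_k\) when \(\norm{P_{I_+}x}_k \geq \norm{P_{I_-}x}_k\) and \(x \in B_k\) otherwise, where \(P_I\) is the coordinate projection. The first point is that coordinates outside \(K\) are invisible to \(\norm{\cdot}_k\): a finite combination of \(e_i\) with \(i \notin K\) has zero \(k\)-seminorm by subadditivity, and passing to the limit in the unconditional expansion gives \(\norm{P_{\Z \setminus K}x}_k = 0\). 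Consequently \(1 = \norm{x}_k = \norm{P_{I_+}x + P_{I_-}x}_k \leq \norm{P_{I_+}x}_k + \norm{P_{I_-}x}_k\), so the dominant part has seminorm at least \(1/2\); in particular \(\norm{P_{I_+}x}_k \geq 1/2\) for \(x \in A_k\).

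It then remains to prove the uniform blow-up on \(A_k\) (the case of \(B_k\) is identical with \(F_w^{-n}\) and \(I_-\)). Put \(u = P_{I_+}x\). Since \(F_w^n\) carries the support \(I_+\) onto its translate \(I_+ + n\), unconditionality yields \(\norm{F_w^n x}_\beta \geq C^{-1}\norm{F_w^n u}_l\) for a suitable \(\beta \geq l\) and a constant \(C\) independent of \(x\) and \(n\). Applying assumption (2) to the coefficient sequence \((u_i w_{i+n-1}\cdots w_i)_{i \in I_+}\), whose plain shift by \(n\) equals \(F_w^n u\), bounds \(\norm{F_w^n u}_l\) below by \(c\,\norm{\sum_{i \in I_+} u_i \lambda_i^{(n)} e_i}_k\), where \(\lambda_i^{(n)} = w_{i+n-1}\cdots w_i\,\norm{e_{i+n}}_l/\norm{e_i}_k\} satisfies \(\abs{\lambda_i^{(n)}} = \norm{F_w^n(e_i/\norm{e_i}_k)}_l\). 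Assumption (1), with the multiplier sequence taken to equal \(\lambda_i^{(n)}\) on \(I_+\), then factors out the infimum of the multipliers and produces
\[\norm{F_w^n x}_\beta \geq \frac{c}{C}\left(\inf_{i \in I_+}\norm{F_w^n\left(\frac{e_i}{\norm{e_i}_k}\right)}_l\right)\norm{u}_k \geq \frac{c}{2C}\inf_{i \in I_+}\norm{F_w^n\left(\frac{e_i}{\norm{e_i}_k}\right)}_l.\]
The right-hand side is independent of \(x\) and tends to \(\infty\) by the uniform limit in (2), which is exactly uniform convergence on \(A_k\).

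The main obstacle I anticipate is bookkeeping rather than conceptual: one must verify that the unconditional basis constant, the constant \(c\) of assumption (2), and the factor extracted via assumption (1) are all independent of the point \(x \in S_k\) and of the iterate \(n\), so that the final estimate is genuinely uniform. A secondary technical matter is to ensure that the rescaled sequences appearing when (2) is invoked actually lie in \(X\), so that \(\norm{\cdot}_k\) is defined on them; I expect to settle this by first proving the estimate for finitely supported \(x\) and then extending by continuity of the seminorms and of the projections.
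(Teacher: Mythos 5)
Your proposal is correct and follows essentially the same route as the paper's proof: restriction to normalized canonical vectors for the forward implication, and for the converse the same chain of estimates (uniform boundedness of the unconditional-basis projections applied to the shifted support $I_++n$, then assumption (2) to shift back to the $k$-seminorm, then assumption (1) to extract $\inf_{i\in I_+}\norm{F_w^n(e_i/\norm{e_i}_k)}_l$, with the lower bound $1/2$ coming from the triangle inequality). Your splitting of $S_k$ by comparing $\norm{P_{I_+}x}_k$ with $\norm{P_{I_-}x}_k$ is only cosmetically different from the paper's threshold sets $\{x\in S_k:\norm{x|_{I_\pm}}_k\geq 1/2\}$, and your attention to the uniformity of the projection constant over the translates $I_++n$ is the same point the paper settles by taking the family $\{P_I: I\subset\Z\}$ to be uniformly bounded.
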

	\begin{proof}
		\((i) \Longrightarrow (ii)\): Fix \(k \in \N\). By \((i)\) there is \(l \in \N\) and a~decomposition \(S_k = A_k \cup B_k\) as in~\Cref{def:UTE}. Let \(I = \{i \in \Z : \norm{e_i}_k \neq 0 \}\) and define
		\begin{align*}
			I_+ &= \left\{i \in I : \frac{e_i}{\norm{e_i}_k} \in A_k \right\}, \\
			I_- &= \left\{i \in I : \frac{e_i}{\norm{e_i}_k} \in B_k \right\}.
		\end{align*}
		Then for any \(i \in I\) we have that \(e_i/\norm{e_i}_k \in S_k\) and hence also that
		\[\norm{F^n_w\left(\frac{e_i}{\norm{e_i}_k}\right)}_l \to \infty\]
		as \(n \to \infty\) uniformly on \(I_+\) and
		\[\norm{F^{-n}_w\left(\frac{e_i}{\norm{e_i}_k}\right)}_l \to \infty\]
		as \(n \to \infty\) uniformly on \(I_-\).

		\((ii) \Longrightarrow (i)\) Let \(k \in \N\) be given. We can find $l, I_+, I_-$ as in (ii). Denote \(I = I_{+} \cup I_{-}\). Note that if \(x = \sum_{i \in \Z}x_i e_i\) then \(\norm{x}_k = \norm{\sum_{i\in I} x_i e_i}_k\). By our assumption that the canonical basis is unconditional, we have that the projections \(x \mapsto x |_{I_+}\) and \(x \mapsto x |_{I_-}\) are continuous and thus there exist \(m \in \N\) and \(\hat{c} > 0\) such that \(\norm{x |_{I_+}}_l \leq \hat{c} \norm{x}_m\) and \(\norm{x |_{I_-}}_l \leq \hat{c} \norm{x}_m\).
		
		Let \(c\) be given by the assumption \((2)\) for \(k,l\). Now define
		\begin{align*}
			A_k &= \{x \in S_{_k} : \norm{x |_{I_+}}_k \geq 1/2\}\\
			B_k &= \{x \in S_{k} : \norm{x |_{I_-}}_k \geq 1/2\}.
		\end{align*}
		By the triangle inequality, for every \(x \in S_{k}\) we have that \(\max{(\norm{x |_{I_+}},\norm{x |_{I_-}})} \geq \frac{1}{2}\) and hence $S_k = A_k \cup B_k$.
		
		Pick \(x \in S_{k}\). Potentially switching $F_w$ with $F_w^{-1}$, we can assume \(x \in A_k\).
		Let \(x = \sum_{i\in I} x_i e_i\). Using, the fact that $F_w^n(e_i)$ is a~scalar multiple of $e_{i+n}$, we obtain
		\begin{align*}
			\norm{F_w^n (x) }_m
			&= \norm{\sum_{i\in I} x_i F_w^n(e_i)}_m
			\geq \hat{c} \norm{\sum_{i \in I_+} x_i F_w^n(e_i)}_l \\
			&= \hat{c}\norm{\sum_{i \in I_+} x_i \frac{\norm{F_w^n(e_i)}_l}{\norm{e_{i+n}}_l}e_{i+n}}_l \\
			&\geq c \hat{c} \norm{\sum_{i \in I_+} x_i \frac{\norm{F_w^n(e_i)}_l}{\norm{e_{i+n}}_l}\frac{\norm{e_{i+n}}_l}{\norm{e_i}_k} e_i}_k
			= c \hat{c}\norm{\sum_{i \in I_+} x_i \frac{\norm{F_w^n(e_i)}_l}{\norm{e_{i}}_k}e_i}_k \\
			&\geq c \hat{c} \inf_{i \in I_+}\left(\frac{\norm{F^n_w(e_i)}_l}{\norm{e_i}_k}\right) \norm{\sum_{i \in I_+} x_i e_i}_k
			\geq \frac{c \hat{c}}{2} \inf_{i \in I_+}\left(\frac{\norm{F^n_w(e_i)}_l}{\norm{e_i}_k}\right).
		\end{align*}
		By (ii) the last term converges uniformly to infinity as \(n \to \infty\).
	\end{proof}
	
	In the remainder of the section, we will check the assumptions of~\Cref{thm:UTEcharacterisation} for some classes of spaces and give appropriately simplified statements. For the standard sequence spaces \(\ell_p\) (\(1 \leq p < \infty\)) and \(c_0\) the uniform expansivity of weighted shifts was studied in \cite{Expansivity_and_shadowing}. Our result recovers the original result \cite[Theorem E]{Expansivity_and_shadowing}.
	
	\begin{corollary}
		\label{cor:UTEforEllpAndc0}
		Let $X = c_0(\Z)$ or $X = \ell_p(\Z)$ for $1 \leq p < \infty$ and $F_w$ be an~invertible bilateral weighted forward shift on $X$. Then the following are equivalent:
		\begin{enumerate}
			\item \(F_w\) is uniformly topologically expansive;
			\item There is a~decomposition \(I_{+} \cup I_{-} =\Z\) such that
			\begin{align*}
				\abs{w_{i+n-1} \cdots w_{i}} = \norm{F_w^n(e_i)} &\to \infty \ \text{uniformly on} \ I_+ \ \text{as} \ n \to \infty, \\
				\abs{w_{i-n} \cdots w_{i-1}}^{-1} = \norm{F_w^{-n}(e_i)} &\to \infty \ \text{uniformly on} \ I_- \ \text{as} \ n \to \infty.
			\end{align*}
		\end{enumerate}
	\end{corollary}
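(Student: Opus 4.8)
The plan is to deduce the corollary from \Cref{thm:UTEcharacterisation} by verifying that $c_0(\Z)$ and $\ell_p(\Z)$ satisfy its hypotheses and then checking that, for these particular spaces, condition (ii) of the theorem collapses to the stated form.

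First I would record the structural facts. Both $c_0(\Z)$ and $\ell_p(\Z)$ are Banach spaces, hence Fréchet sequence spaces over $\Z$, and the canonical vectors $(e_n)_{n\in\Z}$ form a $1$-unconditional basis, so all projections $P_I$ have norm $1$. Their topology is induced by a single norm $\norm{\cdot}$, which we regard as the constant---hence trivially increasing---sequence of seminorms $\norm{\cdot}_k = \norm{\cdot}$, $k \in \N$. Invertibility of $F_w$ is part of the hypothesis, and as recalled in the introduction it is on these spaces equivalent to $w$ being bounded above with $\inf_{i}\abs{w_i} > 0$. In particular $\norm{e_i} = 1$ for every $i$, so the index set $\{i : \norm{e_i}_k \neq 0\}$ is all of $\Z$.

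Next I would verify assumptions (1) and (2). For (1), the explicit formula for the norm gives, in the $\ell_p$ case, $\norm{(a(i)x(i))}_p = \bigl(\sum_i \abs{a(i)}^p\abs{x(i)}^p\bigr)^{1/p} \geq \bigl(\inf_i \abs{a(i)}\bigr)\norm{x}_p$, and analogously with the supremum in the $c_0$ case; this is exactly (1). For (2), since there is a single norm we have $\norm{e_{i+n}}_l = \norm{e_i}_k = 1$, so all the factors $\norm{e_{i+n}}_l/\norm{e_i}_k$ equal $1$ and the inequality to be checked reads $\norm{\sum_i x_i e_{i+n}} \geq c\,\norm{\sum_i x_i e_i}$. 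The unweighted shift $e_i \mapsto e_{i+n}$ is an isometry of $c_0(\Z)$ and of $\ell_p(\Z)$, so this holds with $c = 1$.

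Finally I would feed this into \Cref{thm:UTEcharacterisation}. Because the topology comes from a single norm, the quantifier ``for every $k$ there is $l$'' in condition (ii) becomes a single decomposition, and because $\norm{e_i} = 1$ for all $i$ every ratio $\norm{e_{i\pm n}}_l/\norm{e_i}_k$ disappears. Using $F_w^n(e_i) = w_{i+n-1}\cdots w_i\, e_{i+n}$ together with its inverse analogue, the two convergence conditions reduce precisely to $\norm{F_w^n(e_i)} = \abs{w_{i+n-1}\cdots w_i}\to\infty$ uniformly on $I_+$ and $\norm{F_w^{-n}(e_i)} = \abs{w_{i-n}\cdots w_{i-1}}^{-1}\to\infty$ uniformly on $I_-$, with $I_+\cup I_- = \Z$, which is exactly condition (ii) of the corollary. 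There is essentially no genuine obstacle here, since the real content lies in \Cref{thm:UTEcharacterisation}; the only point deserving a little care is confirming that assumption (2) truly reduces to the isometry of the unweighted shift on these specific spaces.
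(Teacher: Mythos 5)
Your proposal is correct and follows essentially the same route as the paper: verify the hypotheses of \Cref{thm:UTEcharacterisation} (unconditionality of the canonical basis, assumption (1) via the explicit norm formula, assumption (2) with $c=1$ via the isometry of the unweighted shift), and then observe that with a single norm and $\norm{e_i}=1$ the abstract condition (ii) collapses to the stated one. The only difference is cosmetic --- you spell out the collapse of the quantifier ``for every $k$ there is $l$'' and of the ratios $\norm{e_{i\pm n}}_l/\norm{e_i}_k$, which the paper leaves implicit.
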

	\begin{proof}
		The fact that the canonical bases of both $\ell_p$ and $c_0$ are unconditional is well-known (see e.g.~\cite{BanachSpaceTheory}). The property (1) is seen easily: let $x \in X$ and $a \in \K^\Z$ be such that $(a(i)x(i)) \in X$. Then for $X = c_0(\Z)$ we have
		\begin{align*}
			\norm{(a(i)x(i))}
			= \sup_{i \in \Z} \abs{a(i)x(i)}
			\geq \left(\inf_{i \in \Z} |a(i)|\right) \sup_{i \in \Z} \abs{x(i)}
			= \left(\inf_{i \in \Z} |a(i)|\right) \norm{x}_k
		\end{align*}
		and similarly for $\ell_p(\Z)$.	Equally easily one sees that the unweighted shift is an~isometry for both $c_0(\Z)$ and $\ell_p(\Z)$. Hence, for any $i, n \in \Z$ we have $\norm{e_{i+n}} = \norm{e_i}$ and also
		\begin{equation*}
			\norm{\sum_{i\in\Z}x_i e_{i+n}}
			= \norm{\sum_{i\in\Z} x_i e_i}
			= \norm{\sum_{i\in\Z} x_i \frac{\norm{e_{i+n}}}{\norm{e_i}} e_i},
		\end{equation*}
		giving the property (2) with $c = 1$.
	\end{proof}
	
	We finish by checking the assumptions for K\"othe spaces (example of which is the space of rapidly decaying sequences). Recall that an~infinite matrix \( A = (a_{j,k})_{j \in \Z,k \in \mathbb{N}} \) is said to be a~\emph{K\"othe matrix} if $0 \leq a_{j,k} \leq a_{j,k+1}$ for all $j \in \Z,k \in \N$ and for each $j \in \Z$, there exists $k \in \N$ such that $a_{j,k} > 0$. For $1 \leq p < \infty$, define the \emph{K\"othe echelon space}:
	\begin{equation*}
		\lambda^p(A, \Z) := \left\{ x = (x_j)_{j\in\Z} \in \K^\Z : 
		\norm{x}_k := \left( \sum_{j\in\Z} \abs{x_j a_{j,k}}^p \right)^{1/p} < \infty \text{ for all } k \in \N \right\}
	\end{equation*}
	and for $p = 0$ define
	\begin{equation*}
		\lambda^0(A, \Z) := \left\{ x = (x_j)_{j\in\Z} \in \K^\Z : 
		\lim_{j \to \infty} x_j a_{j,k} = 0, \quad 
		\norm{x}_k := \sup_{j \in \Z} \abs{x_j} a_{j,k} < \infty \text{ for all } k \in \N \right\}.
	\end{equation*}
	
	\begin{corollary}
		Let $A = (a_{j,k})_{j \in \Z,k \in \mathbb{N}}$, $X = \lambda^p(A, \Z)$ for some $p \in \{0\}\cup[1,\infty)$ and $F_w$ be an~invertible bilateral weighted forward shift on $X$. Then the following are equivalent:
		\begin{enumerate}
			\item \(F_w\) is uniformly topologically expansive;
			\item for every $k \in \N$ there is $l \in \N$ and a~decomposition $I_{+} \cup I_{-} = \{i \in \Z \colon a_{i, k} \neq 0\}$ such that
			\begin{align*}
				\abs{w_{i+n-1} \cdots w_{i}}\frac{a_{i+n,l}}{a_{i,k}} = \norm{F_w^n\left(\frac{e_i}{a_{i,k}}\right)}_l &\to \infty \ \text{uniformly on} \ I_+ \ \text{as} \ n \to \infty, \\
				\abs{w_{i-n} \cdots w_{i-1}}^{-1}\frac{a_{i-n, l}}{a_{i,k}} = \norm{F_w^{-n}\left(\frac{e_i}{a_{i,k}}\right)}_l &\to \infty \ \text{uniformly on} \ I_- \ \text{as} \ n \to \infty.
			\end{align*}
		\end{enumerate}
	\end{corollary}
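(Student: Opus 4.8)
The plan is to deduce this corollary directly from \Cref{thm:UTEcharacterisation} by verifying that each echelon space $\lambda^p(A,\Z)$ (for $p \in \{0\}\cup[1,\infty)$) satisfies the hypotheses of that theorem, and then identifying $\norm{e_i}_k = a_{i,k}$. Indeed, one computes $\norm{e_i}_k = a_{i,k}$ directly from the defining seminorms, so $\{i : \norm{e_i}_k \neq 0\} = \{i : a_{i,k} \neq 0\}$ and condition (ii) of \Cref{thm:UTEcharacterisation} becomes verbatim the condition (2) stated here; thus the whole argument is a matter of checking the structural assumptions.

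First I would record the structural facts. The seminorms $\norm{\cdot}_k$ are increasing because $a_{j,k} \le a_{j,k+1}$ for all $j,k$. The canonical vectors $(e_n)_{n\in\Z}$ form an unconditional basis: for each fixed $k$ the defining summability (resp. decay) condition forces the bilateral truncations of any $x \in \lambda^p(A,\Z)$ to converge to $x$ in $\norm{\cdot}_k$ — a tail estimate for $1 \le p < \infty$, respectively the condition $x_j a_{j,k} \to 0$ for $p = 0$ — so $(e_n)$ is a basis in the sense of the introduction; and each coordinate restriction $P_I \colon x \mapsto x|_I$ satisfies $\norm{P_I x}_k \le \norm{x}_k$, so the projections are uniformly (indeed contractively) bounded and the basis is unconditional.

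Next I would check assumption (1). Given $x \in X$ and $a \in \K^\Z$ with $(a(i)x(i)) \in X$, pulling the scalar $\inf_i |a(i)|$ out of the weighted $p$-sum (resp. the weighted supremum for $p = 0$) yields $\norm{(a(i)x(i))}_k \ge (\inf_i |a(i)|)\norm{x}_k$ immediately, exactly as in the proof of \Cref{cor:UTEforEllpAndc0}.

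The remaining and only mildly computational point is assumption (2), where I expect the verification to be clean rather than obstructive. Fix $k,l$ and $x = \sum_i x_i e_i$ supported on $\{i : a_{i,k} \neq 0\}$. Since the unweighted shift merely relabels indices, for $1 \le p < \infty$ both sides equal $\left(\sum_i \abs{x_i a_{i+n,l}}^p\right)^{1/p}$: on the left because $\sum_i x_i e_{i+n}$ carries the value $x_i$ to position $i+n$, and on the right because the factor $\norm{e_i}_k = a_{i,k}$ cancels the denominator in $\frac{\norm{e_{i+n}}_l}{\norm{e_i}_k} = \frac{a_{i+n,l}}{a_{i,k}}$ once the $k$-seminorm is applied. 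Hence (2) holds with $c = 1$, and the $p = 0$ case is identical with the supremum replacing the $p$-sum. With all hypotheses verified, \Cref{thm:UTEcharacterisation} applies and, after substituting $\norm{e_i}_k = a_{i,k}$, gives precisely the stated equivalence. The only thing to watch is not a hard estimate but correct bookkeeping of indices under the shift, together with respecting the support restriction $a_{i,k} \neq 0$ wherever one divides by $a_{i,k}$.
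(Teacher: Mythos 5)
Your proposal is correct and follows essentially the same route as the paper: verify the hypotheses of \Cref{thm:UTEcharacterisation} for $\lambda^p(A,\Z)$ (unconditionality via contractive coordinate projections, assumption (1) by extracting $\inf_i\abs{a(i)}$, assumption (2) with $c=1$ via the index-shift computation showing both seminorms coincide) and then substitute $\norm{e_i}_k = a_{i,k}$. The paper's proof is just a terser version of the same argument, likewise treating $p=0$ as analogous to $1 \leq p < \infty$.
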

	\begin{proof}
		We will consider the case $p \in [1,\infty)$, the case $p=0$ can be seen analogously. The canonical basis is unconditional for if $I \subset \Z$, $k \in \N$ and $x \in X$, then
		\begin{equation*}
			\norm{x}_k^p
			= \sum_{j\in\Z} \abs{x_j a_{j,k}}^p
			\geq \sum_{j\in I} \abs{x_j a_{j,k}}^p
			= \norm{P_Ix}_k^p.
		\end{equation*}
		The condition (1) can be verified in the same way as for $c_0$ or $\ell_p$. Clearly $\norm{e_j}_k = a_{j, k}$ for all $j \in \Z, k\in\N$ and to verify (2) we calculate
		\begin{equation*}
			\norm{\sum_{i\in\Z} x_i \frac{\norm{e_{i+n}}_l}{\norm{e_i}_k} e_i}_k^p
			= \sum_{i\in\Z} \abs{x_i \frac{\norm{e_{i+n}}_l}{\norm{e_i}_k} a_{i,k}}^p
			= \sum_{i\in\Z} \abs{x_i a_{i+n,l}}^p
			= \norm{\sum_{i \in \Z} x_i e_{i+n}}_l^p.
		\end{equation*}
	\end{proof}

	\section{The periodic shadowing property} \label{sect:PSP}
	
	This section is motivated by~\cite[Problem C]{Shadowing_and_chain_rec} which reads: To characterize the periodic shadowing property for bilateral weighted shifts on Banach sequence spaces. We partially answer this question.
	
	Unless explicitly stated otherwise, we will consider shifts on either the spaces $\ell_p(\Z)$ for $1 \leq p < \infty$ or $c_0(\Z)$. That is, the running assumption during the whole section is that \(X = \ell_p(\Z)\) with \(1 \leq p < \infty\) or \(X = c_0(\Z)\), \(w = (w_n)_{n \in \Z}\) is a bounded sequence of scalars with \(\inf_{n \in \Z}\abs{w_n} > 0\) and $B_w: X \to X$ is the bilateral weighted backward shift given by
	\begin{equation*}
		B_w: (x_n)_{n \in \Z} \mapsto (w_{n+1}x_{n+1})_{n \in \Z}.
	\end{equation*} 
	
	Closely related to the periodic shadowing property is the concept of chain reccurency. Recall the definition:
	\begin{definition}
		Let $X$ be a~locally convex space and $T: X \to X$ be a~bounded operator. We say that $x \in X$ is a~\emph{chain recurrent point} of $T$ if for every neighbourhood $U$ of $0$ there is a~$U$-chain from $x$ to $x$. We denote by $\CR(T)$ the set of all chain recurrent points of $T$. Finally, we say that $T$ is chain recurrent if $\CR(T) = X$.
	\end{definition}
	It is well-known that $\CR(T)$ is a~closed subspace of $X$.
	
	We have two starting points. One is the following observation made in~\cite{Shadowing_and_chain_rec}: On $\ell_p$ and $c_0$, if $B_w$ has the shadowing property, then it has the periodic shadowing property.
	
	The other is a~result from~\cite{Hyperc_chaotic_weight_shifts} which, among others, establishes a~dichotomy: the set of periodic points of a~bilateral weighted backward shift is either trivial or dense.
	
	\begin{lemma}[{corollary of~\cite[Theorem 9]{Hyperc_chaotic_weight_shifts}}]
		\label{periodic points are dense}
		If \(B_w\) has a~nontrivial periodic point, then the set of periodic points of \(B_w\) is dense in \(X\).
	\end{lemma}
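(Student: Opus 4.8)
The plan is to prove the statement directly, bypassing the cited theorem, by reducing both the hypothesis and the conclusion to a single summability condition on the partial products of the weights. Since the finitely supported sequences $\czz(\Z)$ are dense in $X$, it suffices to approximate every finitely supported vector by periodic points. Introduce the ``potentials'' $\rho_0 = 1$, $\rho_n = w_1\cdots w_n$ for $n>0$ and $\rho_n = (w_{n+1}\cdots w_0)^{-1}$ for $n<0$, so that $\rho_n/\rho_m$ equals the product $w_{m+1}\cdots w_n$ whenever $m<n$. The point of this normalisation is that $B_w^p x = x$ reads $x_n = (\rho_{n+p}/\rho_n)\,x_{n+p}$ for all $n$, that is, $\rho_n x_n$ is constant along each residue class modulo $p$. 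Hence a period-$p$ point is precisely a sequence of the form $x_n = c_{[n]}/\rho_n$, where $[n]$ denotes $n \bmod p$ and $c_0,\dots,c_{p-1}\in\K$ are arbitrary subject only to $x\in X$.

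First I would extract a global decay condition from the hypothesis. If $x\neq 0$ is a period-$p$ point, then $c_r\neq 0$ for some residue $r$, and membership $x\in\ell_p(\Z)$ (resp. $x \in c_0(\Z)$) forces $\sum_{n\equiv r\,(p)}\abs{\rho_n}^{-p}<\infty$ (resp. $\abs{\rho_n}^{-1}\to 0$ along that class as $\abs{n}\to\infty$). This is where the running assumptions $\inf_n\abs{w_n}>0$ and $\sup_n\abs{w_n}<\infty$ are used: for any two residues $r,r'\in\{0,\dots,p-1\}$ the ratio $\rho_{r'+kp}/\rho_{r+kp}$ is a product of fewer than $p$ weights, hence bounded above and below by constants independent of $k$. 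I expect this propagation of the decay from one residue class to all of them to be the only genuinely delicate step; everything else is bookkeeping with the potentials. It yields the global condition $\sum_{n\in\Z}\abs{\rho_n}^{-p}<\infty$ (resp. $\abs{\rho_n}^{-1}\to 0$ as $\abs{n}\to\infty$).

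With the global condition in hand I would manufacture periodic points localised anywhere. For $n_0\in\Z$ and $q\in\N$ define $z^{(n_0,q)}$ by setting $z^{(n_0,q)}_n = \rho_{n_0}/\rho_n$ when $n\equiv n_0\pmod q$ and $z^{(n_0,q)}_n=0$ otherwise. By the reformulation above $\rho_n z^{(n_0,q)}_n$ is constant on each class mod $q$, so this is a period-$q$ point; it lies in $X$ because $\norm{z^{(n_0,q)}}^p=\abs{\rho_{n_0}}^p\sum_{n\equiv n_0\,(q)}\abs{\rho_n}^{-p}$ is dominated by the global sum, and it satisfies $z^{(n_0,q)}_{n_0}=1$. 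As $q\to\infty$ every index $n\equiv n_0\pmod q$ with $n\neq n_0$ has $\abs{n}\geq q-\abs{n_0}\to\infty$, so the tail $\norm{z^{(n_0,q)}-e_{n_0}}^p=\abs{\rho_{n_0}}^p\sum_{n\equiv n_0\,(q),\,n\neq n_0}\abs{\rho_n}^{-p}$ of the convergent series tends to $0$; thus $z^{(n_0,q)}\to e_{n_0}$ in $X$.

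Finally I would combine these spikes. Given a finitely supported $y=\sum_{j\in J}y_j e_j$, choose $q>\diam(J)$ so that the points of $J$ lie in distinct residue classes modulo $q$. Then $z:=\sum_{j\in J}y_j z^{(j,q)}$ is again a period-$q$ point, the summands $z^{(j,q)}$ have pairwise disjoint supports, and therefore $\norm{z-y}^p=\sum_{j\in J}\abs{y_j}^p\norm{z^{(j,q)}-e_j}^p\to 0$ as $q\to\infty$ by the previous paragraph. This exhibits an arbitrary finitely supported $y$, and hence every element of $X$, as a limit of periodic points, so the periodic points are dense. The $c_0(\Z)$ case runs in parallel, with the $\ell_p$-sums replaced by suprema and the ``tail $\to 0$'' estimates replaced by ``$\sup$ over far-out indices $\to 0$''.
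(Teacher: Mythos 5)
Your proof is correct, but it takes a genuinely different route from the paper. The paper does not argue the lemma directly at all: it quotes it as a corollary of the known characterisation of chaotic weighted shifts (Theorem~9 of the cited reference), and separately observes in \Cref{rem:densePP} that density of periodic points also falls out of the projection argument used for \Cref{prop:CRdichotomy} --- the set of periodic points is a subspace invariant under the projections $P_{A,k}$ onto arithmetic progressions, so a single nontrivial periodic point can be projected and shifted into approximate basis vectors $\widetilde{e}_l$, finite combinations of which approximate any vector. Your argument instead reproves the relevant portion of the cited theorem: you rewrite $B_w^P x = x$ as constancy of $\rho_n x_n$ on residue classes, convert the existence of a nontrivial periodic point into the global condition $\sum_{n\in\Z} \abs{\rho_n}^{-p} < \infty$ (resp.\ $\abs{\rho_n}^{-1} \to 0$ for $c_0$), and use it to build periodic spikes $z^{(n_0,q)} \to e_{n_0}$ which you then combine with disjoint supports. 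All the steps check out: the propagation of summability from one residue class to all of them, via the two-sided bounds on $\rho_{r'+kP}/\rho_{r+kP}$, is precisely where the running assumptions $\inf_n\abs{w_n}>0$ and $\sup_n\abs{w_n}<\infty$ enter, and the disjoint-support bookkeeping in the last step is valid. What your approach buys is self-containedness and an explicit quantitative criterion (summability of the reciprocal weight products, i.e.\ the classical chaos criterion for bilateral shifts); what the paper's approach buys is softness and reusability, since the same projection trick simultaneously yields the dichotomy for chain-recurrent points in \Cref{prop:CRdichotomy}, where no analogue of your explicit computation exists. One cosmetic flaw: you use the letter $p$ both for the period of the given periodic point and for the exponent of $\ell_p$; these should be distinguished (say, period $P$), though the collision does not affect the argument.
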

	
	First, we will deal with the case when our shift has dense periodic points. We start by a more general result connecting periodic and finite shadowing property for an operator with dense set of periodic points.
	
	\begin{proposition} \label{prop:PSPimpliesFSP}
		Let $X$ be a~locally convex space and $T \in \GL(X)$ be an~operator with dense set of periodic points. If $T$ has the periodic shadowing property, then it has the finite shadowing property. In particular, if $X$ is a~Banach space, then for operators with dense periodic points the PSP implies the SP.
	\end{proposition}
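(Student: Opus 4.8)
The plan is to show that every finite $U$-pseudotrajectory can be completed to a \emph{periodic} $U$-pseudotrajectory agreeing with it on the relevant block of indices; feeding that periodic pseudotrajectory into the periodic shadowing property then produces the required shadowing point directly. Concretely, fix a neighbourhood $V$ of $0$ and let $U$ be the (balanced) neighbourhood supplied by the PSP for $V$. Given a finite $U$-pseudotrajectory $(x_j)_{j=0}^{k}$, the goal is to build a periodic $U$-pseudotrajectory $(y_j)_{j\in\Z}$ of some period $N>k$ with $y_j=x_j$ for $0\le j\le k$. Once this is done, the PSP yields a periodic point $z$ with $y_j-T^jz\in V$ for all $j\in\Z$; restricting to $0\le j\le k$ gives $x_j-T^jz\in V$, so $z$ finitely $V$-shadows $(x_j)_{j=0}^{k}$, which is exactly the finite shadowing property.

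Everything therefore reduces to constructing a \emph{bridge}: a finite $U$-pseudotrajectory from $x_k$ back to $x_0$, which one splices onto the block $x_0,\dots,x_k$ and then repeats periodically. The key point, and the place where density of periodic points enters, is the connectivity claim that for linear $T\in\GL(X)$ with $\overline{\Per(T)}=X$ any two points can be joined by a finite $U$-pseudotrajectory. I would prove this by routing the bridge through the fixed point $0$. Using density, choose a periodic point $q$ (period $p$) close to $x_k$ and a periodic point $q'$ (period $p'$) close to $x_0$. By linearity every scalar multiple $\lambda q$ is again periodic of period $p$, so one can ``spiral down'' to $0$: follow the genuine (error-free) orbit of $q$ once around and, at the closing step of each period, replace the return point $q$ by $\mu q$ with $\mu\in(0,1)$ close to $1$; the only error incurred is $(\mu-1)q$, which lies in $U$ once $\mu$ is close to $1$, and iterating gives a $U$-pseudotrajectory $q\to\mu q\to\mu^2 q\to\cdots$ terminating arbitrarily close to $0$ (balancedness of $U$ keeps all later errors $(\mu-1)\mu^{j}q$ inside $U$). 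Symmetrically one ``spirals up'' from near $0$ along the multiples of $q'$ to reach $q'$, and finally steps to $x_0$. Getting onto the orbit of $q$ from $x_k$ and off the orbit of $q'$ onto $x_0$ costs the errors $T(q-x_k)$ and $x_0-q'$, which lie in $U$ provided $q,q'$ are chosen close enough, using continuity of $T$ to control $T(q-x_k)$.

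Splicing the block $x_0,\dots,x_k$ with this bridge produces the desired periodic $U$-pseudotrajectory, completing the argument that the PSP implies the finite shadowing property. The main obstacle is precisely the connectivity claim: a priori the finite pseudotrajectory may have drifted so far that $x_k$ is nowhere near $x_0$, and one cannot connect arbitrary points forward in time without a global hypothesis. Density of periodic points is exactly what supplies it, and it is worth noting that it forces every point to be chain recurrent, since $\Per(T)\subseteq\CR(T)$ and $\CR(T)$ is a closed subspace, whence $\CR(T)=X$; the scaling trick is what upgrades this recurrence to genuine transitivity through $0$. What remains is routine bookkeeping: choosing balanced neighbourhoods, absorbing the finitely many small errors into the single neighbourhood $U$, and checking that the phase lengths add up so that $y_{j+N}=y_j$. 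Finally, the ``in particular'' statement follows by combining PSP $\Rightarrow$ finite shadowing with the standard fact that on a Banach space the finite shadowing property and the (bi-infinite) shadowing property coincide for bounded linear operators, proved by a completeness/limiting argument on the shadowing points of longer and longer finite chains.
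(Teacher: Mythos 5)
Your proof is correct, and its outer strategy is the same as the paper's: extend the given finite $U$-pseudotrajectory to a periodic $U$-pseudotrajectory, feed that loop to the periodic shadowing property, restrict the resulting estimate to the indices $0\le j\le k$, and settle the Banach-space case by the known equivalence of finite shadowing and shadowing for invertible operators, which is exactly the citation the paper makes (\cite{Shadowing_and_chain_rec}). The genuine difference is the bridge that closes the loop. The paper retraces the chain \emph{backwards}: for each $m$ it uses density and continuity to pick a periodic point $y_m$ with $y_m-x_{m-1}$ small and $Ty_m-x_m$ small, and splices one full traversal of the orbit of $y_m$ between $x_m$ and $x_{m-1}$, producing the return path $x_k, T^2y_k,\dots,T^{p_k-1}y_k, x_{k-1}, T^2y_{k-1},\dots,x_0$; the wrap-around error at each insertion is exactly $y_m-x_{m-1}$ because $T^{p_m}y_m=y_m$, so no scaling is ever needed. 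You instead route the bridge through the fixed point $0$: you hop from $x_k$ onto the orbit of a single periodic point $q$ near $x_k$, spiral down along the multiples $\mu^j q$ (legitimate since scalar multiples of periodic points are periodic of the same period, with the errors $(1-\mu)\mu^j q$ kept in $U$ by balancedness), then spiral up along multiples of a single periodic point $q'$ near $x_0$ and step off onto $x_0$ with error $q'-x_0$. Both constructions are valid. Yours needs only two periodic points rather than one near every $x_{m-1}$, leans harder on linearity and balanced neighbourhoods, and proves en passant the stronger fact that density of periodic points makes $T$ chain transitive; it is also, pleasingly, the same $\eta$-scaling spiral that the paper itself deploys later in the proof of \Cref{prop:PdPTstart}. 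The paper's bridge is more parsimonious step by step (no scaling, no balancedness) and stays closer to a purely topological-dynamical argument. The bookkeeping you defer (the transition error $T(x_k-q)$, the step across $0$, the wrap-around onto $x_0$, and the count making the period add up) is indeed routine.
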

	\begin{proof}
		Let $V$ be any neighbourhood of $0$ in $X$ and let $U = U_1 \cap U_2$, where \(U_1 \) is associated to $V$ for the periodic shadowing property and \(U_2\) is such that for any \(x,y \in X\) we have that \(x-y \in U_2\) implies \(B_w(x-y) \in U_1\). 
		
		Let \((x_j)_{j=0}^{k}\) for some \(k \in \N\) be a \(U\)-pseudotrajectory. We can find periodic points \(y_m\) for \(m \in \{1, \dots k\}\) such that \(y_m - x_{m-1} \in U_1\) and \(B_w(y_m) - x_m \in U\) which in turn implies \(B_w^2(y_m) - B_w(x_m) \in U_1\). Suppose that the point \(y_m\) has a period \(p_m > 2\) (if $B_w(y_m) = y_m$ or \(B^2_w(y_m) = y_m\) then the point is also $p$-periodic for \(p = 4\)). Then we have that the sequence
		\begin{equation*}
			x_0, \dots, x_k,
			B_w^2(y_k), \dots, B_w^{p_k-1}(y_k), x_{k-1},
			B_w^{2}(y_{k-1}), \dots, B_w^{p_{k-1}-1}(y_{k-1}),
			x_{k-2}, \dots, x_0
		\end{equation*}
		is a~\(U_1\)-pseudotrajectory: the steps which do not follow immediately from periodicity of $y_m$'s or the fact that $(x_j)$ is a~$U$-pseudotrajectory are exactly
		\begin{equation*}
			B_w x_m - B_w^2(y_m) \in U_1 \quad \text{and} \quad B_w (B_w^{p_m - 1}y_m) - x_{m-1} = y_m - x_{m-1} \in U.
		\end{equation*}
		
		By repeating this finite segment, we can turn it into an~infinite periodic \(U_1\)-pseudotrajectory. Then, by the periodic shadowing property there is a~periodic point \(x \in X\) that \(V\)-shadows this periodic pseudotrajectory which, in particular, means that $x$ $V$-shadows its initial segment, $x_0, \dots, x_k$.
		
		The \enquote{in particular} part has been proved~\cite[Theorem 1]{Shadowing_and_chain_rec}.
	\end{proof}
	
	\begin{corollary}\label{prop:densePP}
		If the set of periodic points of \(B_w\) is dense, then \(B_w\) has the shadowing property if and only if it has the periodic shadowing property.
	\end{corollary}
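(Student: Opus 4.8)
The plan is to prove the two implications separately, both of which are essentially already in place from the material preceding the statement. First I would dispatch the direction ``shadowing property $\Rightarrow$ periodic shadowing property''. This is exactly the first of the two starting points recalled at the beginning of the section, namely the observation from~\cite{Shadowing_and_chain_rec} that on $\ell_p$ and $c_0$ the shadowing property of $B_w$ implies its periodic shadowing property. Since the running assumptions place us precisely on these spaces, nothing further is needed for this half.

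For the converse, ``periodic shadowing property $\Rightarrow$ shadowing property'', the key is to invoke \Cref{prop:PSPimpliesFSP}. The hypotheses of that proposition are met: $\ell_p(\Z)$ and $c_0(\Z)$ are Banach spaces, the running assumptions ($w$ bounded with $\inf_{n\in\Z}\abs{w_n}>0$) guarantee $B_w \in \GL(X)$, and the set of periodic points is dense by the standing hypothesis of the corollary. Hence the ``in particular'' clause of \Cref{prop:PSPimpliesFSP} applies directly and yields that the periodic shadowing property implies the (full) shadowing property. Combining the two implications gives the claimed equivalence.

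There is no genuine obstacle here; the corollary is a matter of checking that the hypotheses of \Cref{prop:PSPimpliesFSP} hold and then citing the reverse observation from~\cite{Shadowing_and_chain_rec}. The only point deserving a sentence of care is that the shadowing property in the running setting is the full two-sided version: \Cref{prop:PSPimpliesFSP} in its main form delivers the finite shadowing property, which upgrades to the full shadowing property on Banach spaces via~\cite[Theorem~1]{Shadowing_and_chain_rec} (as already used inside the proof of that proposition). Once this is noted, both implications are immediate and the equivalence follows.
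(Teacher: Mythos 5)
Your proof is correct and follows exactly the paper's route: the forward implication is the cited observation from~\cite{Shadowing_and_chain_rec}, and the reverse implication is the ``in particular'' clause of \Cref{prop:PSPimpliesFSP} applied under the running assumptions. Your extra remark on upgrading finite shadowing to full shadowing via~\cite[Theorem~1]{Shadowing_and_chain_rec} is precisely the mechanism already built into that proposition, so nothing is missing.
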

	\begin{proof}
		As we mentioned previously, the forward direction holds irregardless of the periodic points of $B_w$ as was shown in~\cite{Shadowing_and_chain_rec}. The reverse direction follows from~\Cref{prop:PSPimpliesFSP}.
	\end{proof}
	
	Now we turn our attention to the case when the shift has only the trivial periodic point. Observe that a~linear operator which has the periodic shadowing property and no nontrivial periodic points cannot have nontrivial chain-recurrent points. Indeed, if $x \in \CR(T) \setminus \{0\}$, then there is $\delta > 0$ such that every periodic $\delta$-pseudotrajectory is $\norm{x}/2$-shadowed by some periodic point of $T$. But since $x$ is chain recurrent, there is a~periodic $\delta$-pseudotrajectory from $x$ to $x$, implying the existence of a~periodic point of $T$ with norm at least $\norm{x}/2$.
	
	So we see that there is a~connection between periodic shadowing and existence of chain-recurrent points. In fact, the same dichotomy holds for chain recurrent points as we have for periodic points.
	
	\begin{proposition} \label{prop:CRdichotomy}
		Suppose that there is a nontrivial chain recurrent point for \(B_w\). Then \(\CR(B_w) = X\).
	\end{proposition}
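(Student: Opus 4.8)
The plan is to show that the closed subspace $\CR(B_w)$ contains every canonical vector $e_n$; since the canonical basis has dense span and $\CR(B_w)$ is closed, this forces $\CR(B_w)=X$. I first record that $\CR(B_w)$ is invariant under $B_w$ and $B_w^{-1}$: this is immediate from continuity and linearity, since applying $B_w$ to a $\delta/\norm{B_w}$-loop at $x$ yields a $\delta$-loop at $B_w x$ (and symmetrically for $B_w^{-1}$). Because $B_w e_n = w_n e_{n-1}$ with $w_n\neq 0$, invariance propagates membership of a single $e_n$ to all of them. Hence it suffices to produce one nonzero multiple of a canonical vector inside $\CR(B_w)$: fixing the given nonzero $x\in\CR(B_w)$ and an index $c$ with $x(c)\neq 0$, the goal becomes $x(c)e_c\in\CR(B_w)$.

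The main device is that reading off a single coordinate of a pseudotrajectory, while sliding the coordinate leftward in step with the shift, again yields a pseudotrajectory. Concretely, if $z_0,\dots,z_N$ is a $\delta$-chain for $B_w$, then $\xi_i:=z_i(c-i)\,e_{c-i}$ satisfies $\norm{B_w\xi_i-\xi_{i+1}} = |(B_w z_i - z_{i+1})(c-i-1)| \le \norm{B_w z_i - z_{i+1}} < \delta$, using that coordinate evaluation is $1$-bounded on $\ell_p$ and $c_0$. Applying this to a $\delta$-loop at $x$ of some length $N$, and (to make it arbitrarily long) to its $k$-fold concatenation with itself, gives a $\delta$-chain from $x(c)e_c$ to $x(c-kN)e_{c-kN}$. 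The same construction applied to the reversed loop, which is a pseudotrajectory for $B_w^{-1}$ read with the coordinate sliding rightward, produces a $\delta'$-chain for $B_w^{-1}$ from $x(c)e_c$ to $x(c+kN)e_{c+kN}$ with $\delta'\le\norm{B_w^{-1}}\delta$; reversing it once more turns it into a $B_w$-chain from $x(c+kN)e_{c+kN}$ to $x(c)e_c$.

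The sliding-window chains do not close up by themselves, because $B_w$ shifts indices, so their endpoints are \emph{different} canonical vectors rather than the same one; this is the main obstacle. The observation that resolves it is that $x\in\ell_p\subset c_0$ (or $x\in c_0$), so $x(c\pm kN)\to 0$ as $k\to\infty$ and both far endpoints have arbitrarily small norm. Appending one step to $0$ (whose error is at most $\sup_n|w_n|\cdot|x(c-kN)|$) gives a $B_w$-chain from $x(c)e_c$ to $0$, and prepending $0$ to the second chain (whose junction error is $|x(c+kN)|$) gives a $B_w$-chain from $0$ to $x(c)e_c$. Concatenating these two chains at $0$ produces a genuine loop at $x(c)e_c$. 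Choosing $\delta$ small relative to $\norm{B_w}\norm{B_w^{-1}}$ and then $k$ large enough to kill the tail terms makes all errors smaller than any prescribed $\eps>0$, so $x(c)e_c\in\CR(B_w)$. By the reduction of the first paragraph this gives $e_c\in\CR(B_w)$, then every $e_n\in\CR(B_w)$, and finally $\CR(B_w)=X$.
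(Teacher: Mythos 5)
Your argument is correct, and it takes a genuinely different route from the paper's. Both proofs start from the same two observations --- $\CR(B_w)$ is a closed subspace, and coordinate projections are $1$-bounded and equivariant with the shift up to sliding the retained indices --- but they resolve the central difficulty (a projected loop need not close up, precisely because the window slides) in different ways. The paper projects onto $k$-periodic index sets $A + k\Z$ and arranges the loop length to be a multiple of $k$, so the projected pseudotrajectory closes up automatically; this produces \emph{approximate} basis vectors $\widetilde{e}_l \in \CR(B_w)$ agreeing with $e_l$ on a large window $\{-k,\dots,k\}$, and density then follows from a truncation/approximation argument. You instead keep a single sliding coordinate, accept that the resulting chains run from $x(c)e_c$ to $x(c\pm kN)e_{c\pm kN}$ without closing, and restore closure by a detour through $0$, exploiting that the coordinates of $x \in \ell_p$ or $c_0$ decay at infinity, together with chain reversal (at the cost of the factors $\norm{B_w}$, $\norm{B_w^{-1}}$, which you correctly budget for). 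This yields the \emph{exact} basis vectors $e_n \in \CR(B_w)$, so your endgame --- density of $\linsp\{e_n : n \in \Z\}$ plus closedness of $\CR(B_w)$ --- is more direct than the paper's two-step approximation; the price is the bookkeeping of error constants and the two-sided chain constructions for $B_w$ and $B_w^{-1}$. The paper's periodic-projection trick is arguably cleaner in that it never breaks the loop structure, and it has independent value: the identical argument gives the dichotomy for periodic points (cf.~\Cref{rem:densePP}), which your route does not recover, since periodic points would not survive the detour through $0$. Your approach, on the other hand, makes explicit exactly where the geometry of $c_0$/$\ell_p$ (coordinate decay) enters, which the paper uses only implicitly in its final truncation step.
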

	\begin{proof}
		Since chain recurrent points form a closed subspace it is enough to show that they are dense in \(X\).
		
		For the purposes of the proof, for $A \subset \Z$ and $k \in \N$ define a~projection $P_{A, k}: X \to X$ by
		\begin{equation*}
			(P_{A,k}x)(i) = \begin{cases}
				x(i), &i \in A + k\Z,\\
				0, &otherwise.
			\end{cases}
		\end{equation*}
		If $A = \{j\}$, then we will simply write $A_{j, k}$.
		Since the canonical basis of both $\ell_p$ and $c_0$ is unconditionally monotone see \cite{BanachSpaceTheory}, we have that $\norm{P_{A,k}} = 1$ for all $A \subset \Z$ and $k \in \N$. Moreover, it clearly holds that $B_w^n(P_{A,k}x) = P_{A-n,k}B_w^nx$ for all $n \in \Z$. In particular, if $(x_i)_{i=0}^n$ is a~$\delta$-pseudotrajectory of \(B_w\) for some $\delta > 0$, then so is $(P_{A-i, k}x_i)_{i=0}^{n}$, since for any $i < n$ we have
		\begin{equation*}
			\norm{B_w(P_{A-i, k}x_i) - P_{A-i-1, k}x_{i+1}}
			= \norm{P_{A-i-1, k}(B_w x_i) - P_{A-i-1, k}x_{i+1}}
			\leq \norm{B_w x_i - x_{i+1}}
			< \delta.
		\end{equation*}
		
		Before getting to the crux of the matter, we need to prove the\\
		\textbf{Claim:} For any $A \subset \Z$ and \(k \in \Z\) we have that \(\CR(B_w)\) is \(P_{A,k}\)-invariant.
		
		Let $x \in \CR(B_w)$. We will show that $P_{A, k}x \in \CR(B_w)$. Indeed, let $\delta > 0$ be arbitrary. Then there is $m \in \N$ and an~$m$-periodic $\delta$-pseudotrajectory $(x_i)_{i=0}^m$ from $x$ to $x$. Without loss of generality, we may assume that $m = kl$ for some $l \in \N$ as we may simply repeat the periodic pseudotrajectory $k$ times. Then $(P_{A-i, k}x_i)_{i=0}^m$ is a~$\delta$-pseudotrajectory and we have that \[P_{A-m,k}x_{m} = P_{A,k}x_{m} = P_{A,k}x_{0},\] since \(m = kl\) and \(x_m = x_0\) Thus $(P_{A-i, k}x_i)_{i=0}^m$ is again a periodic \(\delta\)-pseudotrajectory.
		
		Having proven the claim, we get to the proof proper.
		Let \(x\) be a~nontrivial chain recurrent point of $B_w$. Then there is $j \in \Z$ such that $x(j) \neq 0$, since \(CR(B_w)\) is a subspace we may assume \(x(j)=1\). Taking arbitrary $k \in \N$, we may apply the claim to $P_{j, 2k}$ to find $\widetilde{e}_j \in \CR(B_w)$ such that for all $-k \leq i \leq k$ we have $\widetilde{e}_j(i) = e_j(i)$. However, $\CR(B_w)$ is a~$B_w$-invariant subspace, so, in fact, we may find $\widetilde{e}_l \in \CR(B_w)$ for any $-k \leq l \leq k$ such that $\widetilde{e}_l(i) = e_l(i)$ for all $-k \leq i \leq k$.
		
		Finally, let $y \in X$ and $\eps > 0$ be arbitrary. Find $k \in \N$ large enough so that for $A = \{-k, \dots, k\}$ we have $\norm{y - y|_A} < \eps/2$. For this $k$, find $\widetilde{e}_l, -k \leq l \leq k$ as above and define $z = \sum_{i=-k}^{k} y(i) \widetilde{e}_i \in \CR(B_w)$. Find $n > k$ so that  we have $\norm{z - z|_{\{-n, \dots, n\}}} < \eps/2$. Then we see that because $y|_A = z|_A$ it is the case that
		\begin{align*}
			\norm{y - P_{A, n}z}
			&\leq \norm{y - y|_A} + \norm{y|_A - (P_{A, n}z)|_A} + \norm{(P_{A, n}z)|_{A^C}}\\
			&= \norm{y - y|_A} + \norm{(P_{A, n}y - P_{A, n}z)|_A} + \norm{(P_{A, n}z)|_{A^C}}\\
			&\leq \frac{\eps}{2} + 0 + \frac{\eps}{2}
		\end{align*}
		while $P_{A, n}z \in \CR(B_w)$ thanks to our claim. Thus the proof is finished as $y \in X$ and $\eps > 0$ were arbitrary.
	\end{proof}
	
	\begin{remark} \label{rem:densePP}
		Notice that from the proof of the proposition also follows that once you have a~nontrivial periodic point, then the set of periodic points is dense. Indeed, every periodic point is in particular chain recurrent, so the only part of the proof which fails for periodic points is that they do not form a~\emph{closed} subspace.
	\end{remark}
	
	Turning back to $\ell_p(\Z)$ and $c_0(\Z)$ spaces, we can put together known results to obtain the chain of implications as follows.
	\begin{proposition} \label{prop:PSPsandwich}
		Assume that $B_w$ has no nontrivial periodic points. Then we have the following chain of implications:
		\begin{equation*}
			B_w \text{ has the SP }
			\implies
			B_w \text{ has the PSP }
			\implies
			\CR(B_w) = \{0\}.
		\end{equation*}
		In terms of weights, we have that given conditions:
		\begin{enumerate}
			\item One of the following conditions holds:
			\begin{enumerate}[label=(i\Alph*)]
				\item $\lim_{n \to \infty} \sup_{k \in \Z} \abs{w(k)w(k+1)\cdots w(k+n)}^{1/n} < 1$;
				\item $\lim_{n \to \infty} \inf_{k \in \Z} \abs{w(k)w(k+1)\cdots w(k+n)}^{1/n} > 1$;
			\end{enumerate}
			\item $B_w$ has the PSP;
			\item One of the following conditions holds:
			\begin{enumerate}[label=(iii\Alph*)]
				\item $\sum_{n=1}^\infty \frac{1}{\abs{w(-n+1) \cdots w(0)}} < \infty$;
				\item $\sum_{n=1}^\infty \abs{w(1) \cdots w(n)} < \infty$;
			\end{enumerate}
		\end{enumerate}
		the chain of implications $(i) \implies (ii) \implies (iii)$ holds.
	\end{proposition}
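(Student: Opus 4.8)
The plan is to prove the two ``abstract'' implications $\mathrm{SP}\Rightarrow\mathrm{PSP}\Rightarrow\CR(B_w)=\{0\}$ first, and then read $(i)\Rightarrow(ii)\Rightarrow(iii)$ off them. The implication $\mathrm{SP}\Rightarrow\mathrm{PSP}$ is exactly the observation of \cite{Shadowing_and_chain_rec} recorded at the start of the section, so nothing new is needed there. For $\mathrm{PSP}\Rightarrow\CR(B_w)=\{0\}$ I would invoke the remark made just before \Cref{prop:CRdichotomy}: under the standing assumption that $B_w$ has no nontrivial periodic points, a nonzero chain recurrent point $x$ would produce (via a periodic $\delta$-pseudotrajectory from $x$ to $x$, which $\mathrm{PSP}$ shadows by a periodic point of norm at least $\norm{x}/2$) a nontrivial periodic point, a contradiction; hence $\CR(B_w)=\{0\}$.

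For $(i)\Rightarrow(ii)$ I would observe that, writing $r(\cdot)$ for the spectral radius, $\norm{B_w^n}=\sup_k\abs{w(k)\cdots w(k+n-1)}$ and $\norm{B_w^{-n}}=(\inf_k\abs{w(k)\cdots w(k+n-1)})^{-1}$, so by Gelfand's formula the condition (iA) is precisely $r(B_w)<1$ and (iB) is precisely $r(B_w^{-1})<1$. In the first case $B_w^n\to 0$ exponentially, so $B_w$ is a contraction in an equivalent norm (equivalently, generalized hyperbolic with $M=X$, $N=\{0\}$); in the second case the same holds for $B_w^{-1}$. A power-contraction on a Banach space has the shadowing property by the usual geometric-series estimate, and the (two-sided) shadowing property passes between $B_w$ and $B_w^{-1}$ by reversing pseudotrajectories; combined with $\mathrm{SP}\Rightarrow\mathrm{PSP}$ this gives $(ii)$.

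For $(ii)\Rightarrow(iii)$, after reducing to $\CR(B_w)=\{0\}$ by the abstract chain, it suffices to prove the contrapositive of ``$\CR(B_w)=\{0\}\Rightarrow(iii)$'': if \emph{both} series diverge, i.e.\ $\sum_n\abs{w(1)\cdots w(n)}=\infty$ and $\sum_n\abs{w(-n+1)\cdots w(0)}^{-1}=\infty$, then $e_0\in\CR(B_w)$, whence $\CR(B_w)\neq\{0\}$. Here I would use the coboundary reformulation of a loop: a finite sequence $x_0=e_0,\dots,x_N=e_0$ with errors $\xi_j=B_wx_j-x_{j+1}$ is a chain from $e_0$ to $e_0$ exactly when $\sum_{j=0}^{N-1}B_w^{N-1-j}\xi_j=B_w^Ne_0-e_0$, the intermediate $x_j$ being irrelevant. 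Since $B_w^Ne_0-e_0$ is supported on $\{-N,0\}$ and each $B_w^p$ sends a basic vector to a scalar multiple of a single basic vector, I would realise the two target coordinates independently: the $-e_0$ part by injections $\alpha_je_{N-1-j}$, each producing a multiple of $e_0$ with gain $\abs{w(1)\cdots w(N-1-j)}$, and the $B_w^Ne_0$ part by injections $\beta_je_{-(j+1)}$, each producing a multiple of $e_{-N}$ with gain $\abs{w(-j)\cdots w(0)}$. Solving the two resulting scalar equations with $\max(\abs{\alpha_j},\abs{\beta_j})<\delta$ is possible precisely because the divergent series bound the attainable totals $\tfrac{\delta}{2}\sum_j\abs{w(1)\cdots w(N-1-j)}$ and $\tfrac{\delta}{2}\sum_j\abs{w(-j)\cdots w(0)}^{-1}$ below by $1$ once $N$ is large.

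The main obstacle is this last construction: one must see that \emph{both} divergence hypotheses are genuinely needed, one to assemble each coordinate of $B_w^Ne_0-e_0$ out of below-$\delta$ injections, check that the two families of injections occupy disjoint coordinates so that they do not interfere, and verify the elementary fact that a divergent series of nonnegative terms lets one hit any prescribed value using summands capped by $\delta/2$. I would also point out that the same partial sums $\delta\sum 1/(w(m)\cdots w(m-i))$ and $\delta\sum w(k)\cdots w(k-i)$ reappear in \Cref{thm:PSPc0}, which is reassuring, and that this chain-recurrence dichotomy can alternatively be quoted directly from \cite{Shadowing_and_chain_rec}.
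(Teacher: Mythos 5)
Your proposal is correct, and on the abstract chain $\mathrm{SP} \implies \mathrm{PSP} \implies \CR(B_w)=\{0\}$ it coincides with the paper's proof (both quote \cite{Shadowing_and_chain_rec} for the first implication and the observation preceding \Cref{prop:CRdichotomy} for the second). Where you genuinely differ is in the two weight implications, where you replace the paper's citations by direct arguments. For $(i)\implies(ii)$ the paper invokes \cite[Theorem~18]{Shadowing_structural_stability} (SP is equivalent to (iA) $\vee$ (iB) $\vee$ (C)) and then argues that the third condition (C) forces dense periodic points and so cannot occur here; your route --- $\norm{B_w^n}=\sup_k\abs{w(k)\cdots w(k+n-1)}$ and $\norm{B_w^{-n}}=\left(\inf_k\abs{w(k)\cdots w(k+n-1)}\right)^{-1}$, so by Gelfand's formula (iA) reads $r(B_w)<1$ and (iB) reads $r(B_w^{-1})<1$, and such hyperbolic operators on Banach spaces have shadowing by the geometric-series estimate --- needs only the easy direction of that equivalence and avoids (C) entirely, at the cost of not recovering the converse $\mathrm{SP}\implies(i)$ which the paper's citation gives for free. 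For $(ii)\implies(iii)$ the paper simply cites \cite[Theorem~14]{Shadowing_and_chain_rec} ((iii) is equivalent to $B_w$ not being chain recurrent) together with \Cref{prop:CRdichotomy}, whereas you re-prove the needed direction by hand: if both series diverge then $e_0\in\CR(B_w)$, via the coboundary identity $\sum_{j=0}^{N-1}B_w^{N-1-j}\xi_j=B_w^Ne_0-e_0$ and disjointly supported injections. This construction checks out: $B_w^{N-1-j}e_{N-1-j}=w(1)\cdots w(N-1-j)\,e_0$ and $B_w^{N-1-j}e_{-(j+1)}=w(-j-1)\cdots w(-N+1)\,e_{-N}$, so after dividing by the target coefficient $w(-N+1)\cdots w(0)$ the second family contributes $\beta_j/(w(-j)\cdots w(0))$, and divergence of the two series is exactly what lets you solve both scalar equations with $\abs{\alpha_j},\abs{\beta_j}<\delta/2$. (One slip of wording: the gain of the injection $\beta_j e_{-(j+1)}$ is $w(-N+1)\cdots w(-j-1)$, i.e.\ relative weight $1/\abs{w(-j)\cdots w(0)}$, not $\abs{w(-j)\cdots w(0)}$; but your final displayed totals use the correct reciprocal, so this is cosmetic.) In short, the paper's proof is a brief assembly of results from the literature, while yours is longer but essentially self-contained and in particular reconstructs one direction of \cite[Theorem~14]{Shadowing_and_chain_rec}.
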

	\begin{proof}
		The fact that the SP implies the PSP was shown in~\cite{Shadowing_and_chain_rec}. The fact that the PSP implies $\CR(B_w) = \{0\}$ was observed before~\Cref{prop:CRdichotomy}.
		
		In~\cite[Theorem~18]{Shadowing_structural_stability}, it is shown that the shadowing property is equivalent to the disjunction of (iA), (iB) and (C), where (C) is some third condition. But, as the authors of~\cite{Shadowing_structural_stability} explain, (C) implies the frequent hypercyclic criterion, which in turn implies that $B_w$ is Devaney chaotic, which entails the periodic points of $B_w$ being dense. Hence, the condition (C) cannot be satisfied if we assume that $B_w$ has only the trivial periodic point and so we can omit it.
		
		By~\cite[Theorem~14]{Shadowing_and_chain_rec}, the condition $(iii)$ is equivalent to $B_w$ not being chain recurrent, which is by~\Cref{prop:CRdichotomy} equivalent to $\CR(B_w) = \{0\}$.
	\end{proof}
	
	Counterexample to the first implication can be found in~\cite[Corollary~23]{Shadowing_and_chain_rec}.
	We will construct a~counterexample to the second implication in a~moment, but before, let us discuss what it means for an~operator to have the periodic shadowing property in absence of nontrivial periodic points.
	
	Let $X$ be any normed linear space and $T: X \to X$ be a~continuous linear operator.
	Assume $T$ has the PSP, let $\eps > 0$ and $\delta > 0$ be associated to said $\eps$ by the PSP. If $(x_i)_{i=1}^n$ is some periodic $\delta$-pseudotrajectory, then it must be $\eps$-shadowed by some periodic point. But since the only periodic point is $0$, we must have $\norm{x_i} = \norm{x_i - T^i 0} < \eps$, i.e. $(x_i) \subset B(0, \eps)$. On the other hand if for every $\eps > 0$ there is $\delta > 0$ so that all periodic $\delta$-pseudotrajectories are contained in $B(0, \eps)$, then they are all $\eps$-shadowed by the trivial periodic point.
	\begin{example} \label{ex:TrivCRbutNotPSP}
		Let $X = c_0(\Z)$ or \(X = \ell_p\) \(p \in [1,\infty)\) and define $w$ by setting 
		\begin{equation*}
			w(k) = \begin{cases}
				\frac{1}{2}, &k \geq 0,\\
				1, &k < 0.
			\end{cases}
		\end{equation*}
		Then $B_w$ does not have nontrivial periodic points, does not have the PSP, but $\CR(B_w) = \{0\}$.
	\end{example}
	\begin{proof}
		Since $1/2 \leq w(k) \leq 1$ for all $k \in \Z$, our shift is well-defined. To see that $\CR(B_w) = \{0\}$, we first realize that
		\begin{equation*}
			\sum_{n=1}^{\infty} \abs{w(1)\cdots w(n)} = \sum_{n=1}^\infty \frac{1}{2^n} < \infty,
		\end{equation*}
		which by~\cite[Theorem~14]{Shadowing_and_chain_rec} implies that $B_w$ is not chain recurrent and hence, by~\Cref{prop:CRdichotomy}, $\CR(B_w) = \{0\}$.
		
		Next, we show that the only periodic point of $B_w$ is $0$. Assume that $x$ is a~periodic point of $B_w$ with period $k \in \N$. If there exist $j \in \Z$ such that $x(j) \neq 0$, then because $w(i) \leq 1, i \in \Z$ the sequence $\abs{x(j + nk)}, n \in \N$ is non-decreasing which is in contradiction with $x \in c_0$ or \(x \in \ell_p\).
		
		Finally, we need to show that $B_w$ does not have the periodic shadowing property. To do this, we will show that even for every $\eps > 0$ and $\delta > 0$ there is a~$\delta$-pseudotrajectory from $0$ to $0$ which is not contained in $B(0, \eps)$. To do this, simply find $n \in \N$ such that $n\delta > \eps$ and consider the sequence $0, \delta e_{-1}, 2\delta e_{-2}, \cdots, n\delta e_{-n}, (n-1)\delta_{-n-1}, \cdots \delta e_{-2n+1}, 0$. Since, the restriction of $B_w$ to $\linsp \{e_i : i < 0\}$ is the unweighted backward shift, this sequence is clearly a~$\delta$-pseudotrajectory and $\norm{n\delta e_{-n}} = n\delta > \eps$.
	\end{proof}
	
	In the example, we found a~``large'' periodic $\delta$-pseudotrajectory starting at $0$. This was not only convenient, but also necessary in the sense of the following proposition.
	\begin{proposition} \label{prop:PdPTstart}
		Let \(X\) be a normed linear space and \(T\) be a~continuous linear operator. Then the following are equivalent.
		\begin{enumerate}
			\item For every \(\varepsilon > 0\) there exists \(\delta > 0\) such that every periodic \(\delta\)-pseudotrajectory for \(T\) is contained in the ball \(B(0,\varepsilon)\).
			\item For every \(\varepsilon > 0\) there exists \(\delta > 0\) such that every periodic \(\delta\)-pseudotrajectory for \(T\) starting at \(0\) is contained in the ball \(B(0,\varepsilon)\).
		\end{enumerate}
		Moreover, if $T$ has no nontrivial periodic points, then these conditions are equivalent to
		\begin{enumerate}
			\item[(iii)] $T$ has the periodic shadowing property.
		\end{enumerate}
	\end{proposition}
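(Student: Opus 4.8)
The forward implication (i) $\Rightarrow$ (ii) is immediate, since a periodic $\delta$-pseudotrajectory starting at $0$ is in particular a periodic $\delta$-pseudotrajectory. Under the extra hypothesis that $T$ has no nontrivial periodic points, the equivalence of (i) and (iii) requires no new work either: it is precisely the computation carried out in the paragraph preceding \Cref{ex:TrivCRbutNotPSP}. Indeed, if $T$ has the PSP then, $0$ being the only periodic point, every periodic $\delta$-pseudotrajectory $(x_j)$ must be $\varepsilon$-shadowed by $0$, so $\norm{x_j} = \norm{x_j - T^j 0} < \varepsilon$ and $(x_j) \subset B(0,\varepsilon)$; conversely, if every periodic $\delta$-pseudotrajectory lies in $B(0,\varepsilon)$ then it is trivially $\varepsilon$-shadowed by the periodic point $0$. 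Thus the sole content of the proposition is the implication (ii) $\Rightarrow$ (i), which I would prove directly.

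Throughout I would exploit the scale invariance of the notion: if $(x_j)$ is a periodic $\delta$-pseudotrajectory then $(\lambda x_j)$ is a periodic $\abs{\lambda}\delta$-pseudotrajectory, so (ii) is equivalent to the existence of a constant $C$ with $\norm{x_j}\le C\delta$ for every based-at-$0$ periodic $\delta$-pseudotrajectory, while (i) is the analogous uniform bound over all periodic $\delta$-pseudotrajectories. The first concrete step is to extract from (ii) a bound on approximate fixed points. Given $v\in X$, the based-at-$0$ periodic pseudotrajectory that ramps along the segment $0, v/N, 2v/N,\dots, v$, idles at $v$, and then ramps back has defect tending to $\norm{(I-T)v}$ as $N\to\infty$ while reaching norm $\norm{v}$; feeding this into (ii) and letting $N\to\infty$ yields $\norm{v}\le C\norm{(I-T)v}$ for every $v$. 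In other words $I-T$ is bounded below, which is exactly the quantitative control needed to handle the time average of a loop.

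The second step is to average a given periodic $\delta$-pseudotrajectory $(x_j)_{j=0}^{p-1}$. Writing $x_{j+1}=Tx_j-e_j$ with $\norm{e_j}<\delta$ and summing over one period gives $(I-T)\bar{x}=\tfrac1p\sum_j e_j$, where $\bar{x}$ is the time average; hence $\norm{(I-T)\bar x}\le\delta$ and, by the bound from the previous step, $\norm{\bar x}\le C\delta$. Subtracting the constant $\bar x$ produces $y_j:=x_j-\bar x$, a periodic pseudotrajectory of defect at most $2\delta$ with $\sum_j y_j=0$, and since $\norm{\bar x}\le C\delta$ it suffices to bound the mean-zero fluctuation loop $(y_j)$.

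Bounding this mean-zero loop is the decisive step and the place where I expect the real difficulty to lie. A mean-zero loop need not come anywhere near $0$, so it cannot be turned into a based-at-$0$ pseudotrajectory merely by subtracting one of its points: doing so raises the defect by $\norm{(I-T)y_{j_0}}$, which is not controlled by $\delta$. This is precisely why $I-T$ being bounded below does \emph{not} by itself yield (i), as the rotation operators illustrate. My plan is to manufacture a based-at-$0$ competitor for (ii) that inherits the excursion $\max_j\norm{y_j}$: start at $0$, move cheaply to the small-norm approximate fixed point $\bar x$, and then transport along a genuine pseudotrajectory into and around the loop before returning to $0$, arranging the splice points so that every inserted step has defect $O(\delta)$. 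Applying (ii) to this competitor would force $\max_j\norm{y_j}=O(\delta)$; combined with $\norm{\bar x}\le C\delta$ this gives $\norm{x_j}=O(\delta)$ for all $j$, which is exactly (i). The quantitative realization of this splice — transporting between $0$ and the loop at cost $O(\delta)$ while keeping the large excursion intact — is where I would concentrate the effort, and I expect it to be the crux of the whole argument.
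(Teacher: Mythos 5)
Your treatment of (i) $\Rightarrow$ (ii) and of the \enquote{moreover} part matches the paper, and your preliminary reductions (scale invariance, the bound $\norm{v} \leq C\norm{(I-T)v}$ via ramp loops, and the averaging step producing a mean-zero loop $(y_j)$ with $\norm{\bar x} \leq C\delta$) are correct as far as they go. But the proof is not complete: the decisive step (ii) $\Rightarrow$ (i) is exactly the part you leave as a plan, and the plan as described does not obviously work. You propose to \enquote{transport along a genuine pseudotrajectory into and around the loop} from the small point $\bar x$, but there is no reason the dynamics of $T$ carries a point of norm $O(\delta)$ anywhere near a loop point $y_{j_0}$ of large norm, and any direct jump from a small point onto the loop has defect comparable to $\norm{y_{j_0}}$, not to $\delta$. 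Since you yourself observe (via rotations) that $I-T$ being bounded below cannot suffice, everything hinges on this unconstructed splice, so the proposal has a genuine gap rather than a different complete argument.

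The paper closes this gap with a mechanism quite different from a splice: it contracts the \emph{entire} loop geometrically. Given a periodic $\delta$-pseudotrajectory $(x_i)_{i=0}^{k}$, one picks $\eta \in (0,1)$ so close to $1$ that $\norm{T x_{k-1} - \eta^{\pm 1} x_0} < \delta$ (possible because $\norm{Tx_{k-1} - x_0} < \delta$ strictly), and then runs the loop repeatedly, multiplying by $\eta$ after each period: all within-period defects scale as $\eta^j\norm{Tx_i - x_{i+1}} < \delta$, and the period-to-period transitions have defect $\eta^j\norm{Tx_{k-1} - \eta x_0} < \delta$. After $n$ periods the whole loop has norm below $\delta$, so one can insert $0$ and then symmetrically scale back up using the $\eta^{-1}$ estimate. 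Cyclically shifting this periodic $\delta$-pseudotrajectory so that it starts at $0$ and applying (ii) bounds every point, in particular the original $x_i$ which appear at scale $\eta^0 = 1$. Note that this renders your averaging step and the lower bound on $I-T$ unnecessary; the linear scaling of defects is the whole trick, and it is precisely the \enquote{transport at cost $O(\delta)$} you were missing.
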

	\begin{proof}
		The implication \((i) \implies (ii)\) is trivial and the moreover part follows directly from the definition of the periodic shadowing property (see discussion before~\Cref{ex:TrivCRbutNotPSP}). We will show that $(ii) \implies (i)$.
		
		Let \((x_i)_{i=0}^{k}\) be a periodic \(\delta\)-pseudotrajectory for \(T\). We can find \(\eta \in (0,1)\) (typically very close to $1$) such that \[\norm{T(x_{k-1}) -\eta^{-1} x_0} < \delta, \ \norm{T(x_{k-1}) -\eta x_0} < \delta.\] We can also find \(n \in \N\) such that \[\norm{T(\eta^nx_{k-1})} < \delta, \ \norm{\eta^n x_0} < \delta.\]
		Then we have that the sequence
		\begin{align*}
			x_0, x_1, \dots x_{k-1}, \eta x_0, &\dots \eta x_{k-1}, \eta^2x_0, \dots,\eta^{n}x_{k-1},0, \\
			&\eta^{n}x_0, \dots, \eta^n x_{k-1}, \eta^{n-1}x_0, \dots, \eta x_{k-1}, x_0
		\end{align*}
		is a \(\delta\)-pseudotrajectory for \(T\). By rearranging the sequence, we can make it into a~\(\delta\)-pseudotrajectory for \(T\) that starts and ends at \(0\). By the assumption then, we have that the modified pseudotrajectory is contained in \(B(0,\varepsilon)\), but this means that the original pseudotrajectory is also contained in \(B(0,\varepsilon)\).
	\end{proof}
	\begin{remark}
		The absence of periodic points is in some way a pathological case. In this case the periodic shadowing property does not imply \enquote{periodic pseudotrajectiories are shadowed} but rather \enquote{there are no non-trivial pseudotrajectories}. This is also the reason why the periodic shadowing property and shadowing property do not coincide as this pathological case needs to be separated.
	\end{remark}
	
	It is time to put all the pieces together.
	
	\begin{theorem} \label{thm:PSPabstract}
		Let $B_w$ be a~bilateral weighted backward shift on \(X = \ell_p(\Z)\) \((1 \leq p < \infty)\) or \(X = c_0(\Z)\), given by a~bounded sequence of weights \(w = (w_n)_{n \in \Z}\) with \(\inf_{n \in \Z}\abs{w_n} > 0\). Then one of the following happens:
		\begin{itemize}
			\item $\Per(B_w)$ is dense in $X$ and in this case
			\begin{center}
				$B_w$ has the periodic shadowing property $\iff$ $B_w$ has the shadowing property;
			\end{center}
			\item $\Per(B_w) = \{0\}$ in which case consider the conditions
			\begin{enumerate}
				\item $B_w$ has the periodic shadowing property;
				\item For every $\eps > 0$ there is $\delta > 0$ such that for all $n \in \N$ and $(y_i)_{i=1}^n \subset B(0, \delta)$ such that $\sum_{i=0}^{n-1} B_w^{i+1} y_{n-i} \in B(0, \delta)$ we have that $\sum_{i=0}^{k-1} B_w^i y_{k-i} \in B(0, \eps)$ for all $k \leq n$;
				\item For every $\eps > 0$ there is $\delta > 0$ such that for all $k \leq l \leq m$ one of the following holds
				\begin{align*}
					(a) &\quad \abs{\frac{\varepsilon}{w(l+1)\dots w(m)}- \delta \sum_{i=0}^{m-(l+1)}\frac{1}{w(m) \dots w(m-i)}} \geq \delta \\
					(b) &\quad\quad \abs{w(k) \dots w(l)\varepsilon -\delta\sum_{i=0}^{k-(l-1)}w(k) \dots w(k-i)} \geq \delta.
				\end{align*}
			\end{enumerate}
			Then we have $(i) \iff (ii) \implies (iii)$.
		\end{itemize}
	\end{theorem}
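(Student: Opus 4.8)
The plan is to split the argument exactly as the statement does: first settle the dichotomy and the two ``outer'' equivalences using the machinery already built up in this section, and then pour all the effort into $(ii)\implies(iii)$.

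The dichotomy between the two bullet points is precisely \Cref{periodic points are dense}: either $\Per(B_w)=\{0\}$ or the periodic points are dense. In the dense case \Cref{prop:densePP} already yields that PSP and SP coincide, so the first bullet needs nothing new. Thus I assume $\Per(B_w)=\{0\}$ from here on. For $(i)\iff(ii)$ I would invoke \Cref{prop:PdPTstart}: since $B_w$ has no nontrivial periodic points, (i) (the PSP) is equivalent to the condition that for every $\eps$ there is $\delta$ so that every periodic $\delta$-pseudotrajectory \emph{starting at $0$} lies in $B(0,\eps)$. The only remaining task is then purely algebraic. Writing a periodic pseudotrajectory $0=x_0,x_1,\dots,x_{N-1}$ (with $x_N=x_0=0$) through its increments $y_j:=x_j-B_w x_{j-1}$, the recursion $x_k=B_w x_{k-1}+y_k$ with $x_0=0$ solves to $x_k=\sum_{i=0}^{k-1}B_w^i y_{k-i}$; the closing step $\norm{B_w x_{N-1}}<\delta$ becomes $\sum_{i=0}^{n-1}B_w^{i+1}y_{n-i}\in B(0,\delta)$ with $n=N-1$, and ``contained in $B(0,\eps)$'' becomes $\sum_{i=0}^{k-1}B_w^i y_{k-i}\in B(0,\eps)$ for all $k\le n$. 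This is verbatim the tuple condition (ii), and the substitution is a bijection between admissible increment tuples and periodic pseudotrajectories through $0$, so $(i)\iff(ii)$ falls out immediately.

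For $(ii)\implies(iii)$ I would argue by contraposition and build an explicit bad pseudotrajectory, directly generalizing \Cref{ex:TrivCRbutNotPSP}. Fix $\eps$, take the $\delta$ supplied by (ii) (shrunk by the factor $\sup_n\abs{w_n}$ to absorb the final closing step), and suppose some triple $k\le l\le m$ violates both (a) and (b). The clause (a) is exactly the balance for a build-up staircase: injecting $\delta e_m,\delta e_{m-1},\dots,\delta e_l$ at consecutive times produces single-coordinate states that align along one orbit and accumulate, reaching at coordinate $l$ the value $V=\delta\sum_{s=0}^{m-l}w_{l+1}\cdots w_{l+s}=\delta\,w_{l+1}\cdots w_m\,(1+S_a)$ where $S_a=\sum_{i=0}^{m-l-1}\tfrac{1}{w_m\cdots w_{m-i}}$. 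An elementary rearrangement shows $V\ge\eps$ iff $\tfrac{\eps}{w_{l+1}\cdots w_m}-\delta S_a\le\delta$, so violating the upper side of (a) forces $V>\eps$, while the lower side keeps the overshoot of $V$ over $\eps$ of order $\delta$ relative to the relevant weight product. Symmetrically, violating (b) is the statement that from a value $\approx\eps$ at coordinate $l$ one can dissipate the mass by injecting $-\delta$ at each subsequent step, letting it flow down to coordinate $k$ and arrive below $\delta/\sup_n\abs{w_n}$, so that one final $\delta$-step closes the loop at $0$. Splicing the build-up half $0\to Ve_l$ (with $V>\eps$) to the dissipation half $Ve_l\to 0$ produces a periodic $\delta$-pseudotrajectory through $0$ that leaves $B(0,\eps)$, contradicting (ii); hence the chosen $\delta$ witnesses (iii).

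The hard part will be the bookkeeping inside this last construction: matching the accumulated staircase values to the precise weight-product sums in (a) and (b), checking that every increment---including the sign-reversed ones in the dissipation half and the single closing increment---has norm $<\delta$, and, most delicately, verifying that the overshoot $V-\eps$ controlled by the \emph{two-sided} inequality behind the violation of (a) is small enough for the $\eps$-calibrated dissipation governed by (b) to still terminate at $0$ within the $\delta$-budget. The unconditional monotonicity of the canonical basis (used as in \Cref{prop:CRdichotomy}) is what legitimizes the reduction to single-coordinate states and keeps $\norm{P_{A,k}}=1$, so that the norms of all the intermediate states reduce to the scalar recursions above; I expect that fact to be the workhorse that makes the estimates tractable.
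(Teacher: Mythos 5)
Your handling of the dichotomy, of the dense case, and of $(i)\iff(ii)$ coincides with the paper's own proof: it too invokes \Cref{periodic points are dense} and \Cref{prop:densePP}, and it proves $(i)\iff(ii)$ through \Cref{prop:PdPTstart} by exactly your increment substitution $y_k = x_k - B_w x_{k-1}$, $x_k = \sum_{i=0}^{k-1}B_w^i y_{k-i}$, including the identification of the closing step. The problem lies in $(ii)\implies(iii)$, where your construction genuinely breaks at the very point you flag as delicate, and the paper uses a different (anchoring) construction precisely to avoid this.

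Concretely: your ascent injects a full $\delta$ at every step, so the peak at coordinate $l$ is $V = \delta\,w_{l+1}\cdots w_m(1+S_a)$ with $S_a=\sum_{i=0}^{m-l-1}\frac{1}{w_m\cdots w_{m-i}}$. Setting $r = \frac{\eps}{w_{l+1}\cdots w_m}-\delta S_a$, the failure of (a) gives only $\abs{r}<\delta$, hence $V-\eps=(\delta-r)\,w_{l+1}\cdots w_m$, which is positive but can be as large as $2\delta\abs{w_{l+1}\cdots w_m}$: for instance with $w_j\equiv 2$ on $(l,m]$, $m-l=10$, $\eps=1$, $\delta=10^{-2}$, condition (a) fails while $V\approx 20$. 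So the overshoot $V-\eps$ is not controlled by $\delta$ at all. Your descent, however, is calibrated by the failure of (b) for an initial value of exactly $\eps$ at coordinate $l$; started from $Ve_l$ instead, it terminates at distance roughly $\abs{w_k\cdots w_l}\,(V-\eps)$ from $0$, so the loop cannot be closed within the $\delta$-budget. Neither shrinking $\delta$ by $\sup_n\abs{w_n}$ nor capping only the final injection helps (the correction needed there has size $\abs{\delta-(\delta-r)\,w_{l+1}\cdots w_m}$, again not $<\delta$ in general).

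The missing idea is to anchor the pseudotrajectory at $\eps e_l$ and generate the ascent \emph{backwards}, which is what the paper does. With $S_\delta x = \left(1-\delta/\norm{x}\right)x$, take $x_i=(B_w^{-1}S_\delta)^{m-l+1-i}\,\eps e_l$ for $0<i\le m-l+1$, then $x_i=(S_\delta B_w)^{i-(m-l+1)}\,\eps e_l$, and $x_0=x_{m-k+1}=0$. Every intermediate step then has error exactly $\delta$, the peak is exactly $\eps e_l$, and the two hypotheses are consumed exactly where needed: failure of (a) \emph{is} the statement $\norm{x_1}<\delta$ (the jump from $0$ into the staircase is admissible), and failure of (b) \emph{is} the statement $\norm{B_w x_{m-k}}<\delta$ (the jump back to $0$ is admissible). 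This yields a $2\delta$-pseudotrajectory through $0$ reaching norm $\eps$, contradicting $(i)$ (equivalently $(ii)$), with no overshoot left to control.
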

	\begin{proof}
		The fact the the periodic points are either trivial or dense can be seen either from~\cite[Theorem 9]{Hyperc_chaotic_weight_shifts} or from~\Cref{rem:densePP}. In the case when they are dense, we apply~\Cref{prop:densePP}.
		
		In the case when $\Per(B_w) = \{0\}$, we will use~\Cref{prop:PdPTstart} to show (ii). First, let us assume that $B_w$ has the periodic shadowing property. Choose $\eps > 0$ and find the appropriate $\delta > 0$ by the PSP. Let $n \in \N$ and $(y_i)_{i=1}^n$ be as in the condition. Consider the sequence $x_k = \sum_{i=0}^{k-1} T^i y_{k-i}$, $k \leq n$. Then the assumptions in our condition exactly guarantee that (if we define $x_0 = 0$) $(x_i)_{i=0}^n$ repeated infinitely times becomes a~periodic $\delta$-pseudotrajectory starting at $0$. The PSP then implies that this $\delta$-pseudotrajectory is $\eps$-shadowed by a~periodic point - the only one of which is the zero point. Hence all $x_k$ are contained in $B(0, \eps)$, as desired.
		
		On the other hand, assume that the condition (ii) holds. Again, choose $\eps > 0$ and this time use the condition to find $\delta > 0$. By virtue of~\Cref{prop:PdPTstart}, it is enough to show that every periodic $\delta$-pseudotrajectory starting at $0$ is contained in $B(0, \eps)$. So, let $(x_i)_{i=0}^n$ be such a~pseudotrajectory. Define $y_k = x_k - Tx_{k-1}$, $1 \leq k \leq n$. Since $(x_i)_{i=0}^n$ is a~periodic $\delta$-pseudotrajectory, $y_k$ satisfy the assumptions of the condition and hence we get that $x_k = \sum_{i=0}^{k-1} T^i y_{k-i} \in B(0, \eps)$.
		
		Finally, we show that $(i) \implies (iii)$. Assume $(i)$ holds but $(iii)$ fails. Then there is $\eps > 0$ such that for all $\delta > 0$ there are $k \leq l \leq m$ such that both (a) and (b) fail. For this $\eps > 0$ find $\delta > 0$ so that (i) holds with $2\delta$ and let $k \leq l \leq m$ be the indices given by the failure of (iii). Define the operator $S_\delta: X \to X$ by $S_\delta0 = 0$ and
		\begin{equation*}
			S_\delta x = \left(1 - \frac{\delta}{\norm{x}}\right)x, \quad x \in X \setminus \{0\}.
		\end{equation*}	
		Define $x_i$ for $0 \leq i \leq m-k+1$ by
		\begin{equation*}
			x_i = \begin{cases}
				0, &i=0,\\
				(B_w^{-1} S_\delta)^{m-l+1 - i} \eps e_l, &0 < i < m-l+1,\\
				\eps e_l, & i = m-l+1,\\
				(S_\delta B_w)^{i-(m-l+1)} \eps e_l, & m-l+1 < i < m-k+1,\\
				0, &i=m-k+1.
			\end{cases}
		\end{equation*}
		If we verify that $(x_i)_{i=0}^{m-k+1}$ is a~$2\delta$-pseudotrajectory, then we will have a~contradiction with $(i)$ as $\norm{x_{m-l+1}} = \eps$. If $0 < i < m-l+1$, then, since $x_{i} = B_w^{-1}S_\delta x_{i+1}$, we get
		\begin{equation*}
			\norm{x_{i+1} - B_wx_i}
			=\norm{x_{i+1} - S_\delta x_{i+1}}
			=\norm{(I-S)x_{i+1}}
			= \delta < 2\delta.
		\end{equation*}
		Similarly, for $m-l+1 \leq i < m-k-1$, we have
		\begin{equation*}
			\norm{x_{i+1} - B_wx_i} = \norm{(I-S)B_w x_i} = \delta < 2\delta.
		\end{equation*}
		The nontrivial cases are $i = 0$ and $i = m-k-1$ for which we will use our assumption (a) and (b), respectively, fail. For $i = 0$, we need to show that $\norm{x_1} < \delta$. A~simple induction shows that
		\begin{equation*}
			x_1 = \frac{\varepsilon}{w(l+1)\dots w(m)} e_m- \delta \sum_{i=0}^{m-(l+1)}\frac{1}{w(m) \dots w(m-i)} e_{m}.
		\end{equation*}
		Hence, by (a)~failing, we get
		\begin{equation*}
			\norm{B_w0 - x_1}
			= \norm{x_1}
			= \abs{\frac{\varepsilon}{w(l+1)\dots w(m)} - \delta \sum_{i=0}^{m-(l+1)}\frac{1}{w(m) \dots w(m-i)}}
			< \delta.
		\end{equation*}
		Similarly one shows that
		\begin{equation*}
			B_w x_{m-k} = w(k) \dots w(l)\varepsilon e_k - \delta\sum_{i=0}^{k-(l-1)}w(k) \dots w(k-i) e_k
		\end{equation*}
		and thus also
		\begin{equation*}
			\norm{B_w x_{m-k} - 0}
			= \norm{B_w x_{m-k}}
			= \abs{w(k) \dots w(l)\varepsilon - \delta\sum_{i=0}^{k-(l-1)}w(k) \dots w(k-i)}
			< \delta
		\end{equation*}
		by the failure of (b).
	\end{proof}
	
	On $c_0$, we can show that, in terms of the previous theorem, $(iii) \implies (i)$ holds, too. That way, we have the following cleaner formulation:
	
	\begin{theorem} \label{thm:PSPc0}
		Let $B_w$ be a~bilateral weighted backward shift on \(X = c_0(\Z)\) given by a~bounded sequence of weights \(w = (w_n)_{n \in \Z}\) with \(\inf_{n \in \Z}\abs{w_n} > 0\). If $B_w$ has nontrivial periodic points, then the periodic shadowing property is equivalent to the shadowing property. If not, the periodic shadowing property is equivalent to the condition:
		For every $\eps > 0$ there is $\delta > 0$ such that for all $k \leq l \leq m$ one of the following holds
		\begin{enumerate}
			\item[(a)] \begin{equation*}
				\abs{\frac{\varepsilon}{w(l+1)\dots w(m)}- \delta \sum_{i=0}^{m-(l+1)}\frac{1}{w(m) \dots w(m-i)}} \geq \delta
			\end{equation*}
			\item[(b)] \begin{equation*}
				\abs{w(k) \dots w(l)\varepsilon -\delta\sum_{i=0}^{k-(l-1)}w(k) \dots w(k-i)} \geq \delta.
			\end{equation*}
		\end{enumerate}
	\end{theorem}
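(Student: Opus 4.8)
The dichotomy between trivial and dense periodic points, together with the dense case, requires nothing new: the former is \Cref{rem:densePP} (or \cite[Theorem~9]{Hyperc_chaotic_weight_shifts}) and the latter is exactly \Cref{prop:densePP}. In the remaining case $\Per(B_w)=\{0\}$, \Cref{thm:PSPabstract} already gives the implication from the periodic shadowing property to the displayed condition (this is $(i)\implies(iii)$ there), so the whole theorem reduces to establishing the converse $(iii)\implies(i)$ on $c_0$, i.e. that the condition (a)/(b) forces the periodic shadowing property. The plan is to prove the contrapositive: assuming the periodic shadowing property fails, I produce indices $k\le l\le m$ at which both (a) and (b) fail.

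First I would invoke \Cref{prop:PdPTstart}: since $\Per(B_w)=\{0\}$, a failure of the periodic shadowing property yields an $\eps>0$ such that for every $\delta>0$ there is a periodic $\delta$-pseudotrajectory $(x_i)_{i=0}^{n}$ with $x_0=x_n=0$ and $\norm{x_{i^*}}\ge\eps$ for some $i^*$. The decisive feature of $c_0$ is that its norm is a supremum, so this vector problem splits into \emph{independent} scalar problems along the diagonals $c\in\Z$: writing $d_i:=x_i(c-i)$, the identity $(B_wx_i-x_{i+1})(c-i-1)=w_{c-i}d_i-d_{i+1}$ shows simultaneously that $\abs{w_{c-i}d_i-d_{i+1}}\le\norm{B_wx_i-x_{i+1}}<\delta$ on every diagonal and that $\norm{x_{i^*}}=\sup_{c}\abs{d_{i^*}}$. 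It is exactly here that $c_0$ differs from $\ell_p$: for the $p$-norm the errors of the various diagonals would share a single $\ell_p$-budget rather than each being free below $\delta$, which is why only one implication survives on $\ell_p$. Thus I may fix the diagonal $c=l+i^*$ realising $\abs{d_{i^*}}\ge\eps$ and work with the scalar recursion $d_{i+1}=w_{c-i}d_i+\eta_i$, $\abs{\eta_i}<\delta$, $d_0=d_n=0$.

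Next I would solve this scalar problem in closed form. Unwinding the recursion from $d_0=0$ gives $d_{i^*}=\sum_{r=0}^{i^*-1}\Bigl(\prod_{s=r+1}^{i^*-1}w_{c-s}\Bigr)\eta_r$, whence $\eps\le\abs{d_{i^*}}<\delta\,Q_+$ with $Q_+=\sum_{j=l}^{c-1}\abs{w_{l+1}\cdots w_j}$; unwinding instead from $d_n=0$ gives the symmetric bound $\eps<\delta\,Q_-$ with $Q_-=\sum_{j=c-n+1}^{l}\abs{w_j\cdots w_l}^{-1}$. In particular both full sums already exceed $\eps/\delta$, so I may pass to the first-passage indices: let $m$ be minimal with $\delta\sum_{j=l}^{m}\abs{w_{l+1}\cdots w_j}\ge\eps$ and, symmetrically, $k$ maximal with $\delta\sum_{j=k+1}^{l}\abs{w_j\cdots w_l}^{-1}\ge\eps$. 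At the first-passage index one has $\delta\sum_{j=l}^{m-1}\abs{w_{l+1}\cdots w_j}<\eps$, and multiplying (a) through by $w(l+1)\cdots w(m)$ identifies it precisely with the inequality $\delta\sum_{j=l}^{m}\abs{w_{l+1}\cdots w_j}\le\eps$ — the same algebraic rearrangement by which the vector $x_1=\bigl(\tfrac{\eps}{w(l+1)\cdots w(m)}-\delta\sum_{i=0}^{m-(l+1)}\tfrac{1}{w(m)\cdots w(m-i)}\bigr)e_m$ was computed in the proof of \Cref{thm:PSPabstract}. Since first passage forces $\delta\sum_{j=l}^{m}\abs{w_{l+1}\cdots w_j}>\eps$, condition (a) fails; the mirror computation at $k$ makes (b) fail. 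Hence $(k,l,m)$ witnesses the failure of the displayed condition, completing the contrapositive.

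I expect the only real work to lie in the matching carried out in the previous paragraph. One must track the off-by-one between the diagonal's endpoint coordinate (which carries value $0$) and the index $m$ in (a), so that the last summand $\abs{w_{l+1}\cdots w_m}$ is precisely the term that separates the regime $\delta\sum_{j=l}^{m-1}(\cdots)<\eps$ from $\delta\sum_{j=l}^{m}(\cdots)\ge\eps$ and makes the equivalence with (a) clean exactly at first passage. One must also reconcile the index conventions in (b) (whose summation limits appear to be written for the increasing direction) with the inverse-weight sum $Q_-$ governing the future side, handle the absolute values for complex weights, and dispose of the boundary case $\delta\sum_{j=l}^{m}(\cdots)=\eps$ by passing to $m+1$ (possible since the full sum strictly exceeds $\eps/\delta$) or by shrinking $\eps$ to $\eps/2$. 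Everything else — the decoupling into diagonals, the two closed-form bounds, and the existence of the first-passage indices — is routine once the sup-norm structure of $c_0$ is exploited.
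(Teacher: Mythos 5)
Your proposal is correct and is essentially the paper's own argument: both proofs proceed by contraposition from the failure of the PSP (through \Cref{prop:PdPTstart}), restrict the escaping periodic pseudotrajectory to the diagonal through the coordinate where it leaves $B(0,\eps)$ --- the step where the sup-norm of $c_0$ is decisive, exactly as you emphasize --- and then locate a crossing index of the weighted partial sums at which the algebra shows both (a) and (b) fail. The only real difference is packaging: you solve the scalar recursion $d_{i+1}=w_{c-i}d_i+\eta_i$ in closed form and take first-passage indices, whereas the paper runs a maximality/comparison induction against the auxiliary single-coordinate sequence $(y_i)$ of \eqref{eqn:renormedErrorDef} whose norm drops by exactly $\delta$ per backward step; these coincide (the first $p$ with $\norm{y_p}<\delta$ is precisely your first-passage index $m-l$), and the issues you flag --- the boundary case, the absolute values for non-positive weights, and the index conventions in (b) --- are either handled correctly by your fixes or are typos present in the paper's own formulas.
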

	\begin{proof}
		We will show the reverse implication, proceeding by contraposition. Assume \(B_w\) does not have the PSP. Thus, there is \(\varepsilon > 0\) such that for every \(\eta > 0\) there is a \(\eta\)-pseudotrajectory for \(B_w\) starting at \(0\) such that it is not contained in \(B(0,\varepsilon)\).
		
		Pick \(\delta > 0\). Let \((z_i)_{i=0}^{n+2}\) be a periodic \(\delta\)-pseudotrajectory starting at \(0\) that is not contained in \(B(0,\varepsilon)\). For the ease of notation, define \(x_i = z_{i+1}\) for \(i \in \{0, \dots,n\}\). Then we have that \(\norm{x_0} < \delta \), \(\norm{B_w(x_{n})} < \delta \) and \((x_i)_{i=0}^{n}\) is a \(\delta\)-pseudotrajectory also not contained in \(B(0, \varepsilon)\). 
		
		Let \(r \in \N\) be such that \(\norm{x_r} \geq \varepsilon\). We can find \(l \in \Z\) such that \(\abs{x_r(l)} \geq \varepsilon\). Let \(y_0 = \eps e_l \) and for \(i \in \N\) define
		\begin{equation} \label{eqn:renormedErrorDef}
			y_{i}= \begin{cases}
				\left(1-\cfrac{\delta}{\norm{y_{i-1}}}\right)B_w^{-1}(y_{i-1}), & y_{i-1} \neq 0,\\
				0, & y_{i-1} = 0.
			\end{cases}
		\end{equation}
		This implies that every \(y_i\) has at most one non-zero coordinate.
		
		\textbf{Claim:} There is \(p \leq r\) such that \(\norm{y_p} < \delta\).
		
		Suppose this is not the case. Then we have \(\abs{y_r(l+r)} = \norm{y_r} \geq \delta > \abs{x_{0}(l+r)}\). Note that we have \(\abs{y_0(l)} \leq \abs{x_{r}(l)}\) by the definition of \(y_0\). Let \(u < r\) be maximal such that
		\begin{equation}\label{eqn:maximalu}
			\abs{y_u(l+u)} \leq \abs{x_{r-u}(l+u)}  
		\end{equation}
		(mind you, we have just shown that this holds for $u = 0$, but not $u=r$).
		
		By the assumption, we have that \(\delta \leq \abs{y_u(l+u)}\). This and the definition of \(y_i\) implies that
		\begin{equation} \label{eqn:stepIsDelta}
			\abs{B_w(y_{u+1})(l+u)}+\delta = \abs{y_{u}(l+u)}.    
		\end{equation}
		Since \((x_i)_{i=0}^{n}\) is a \(\delta\)-pseudotrajectory we have that
		\begin{equation} \label{eqn:stepIsLessThanDelta}
			\abs{(B_w(x_{r-u-1})-x_{r-u})(l+u)} < \delta.
		\end{equation}
		All together we get
		\begin{equation*}
			\abs{B_w(y_{u+1})(l+u)}
			\eqNote{\eqref{eqn:stepIsDelta}} \abs{y_u(l+u)}-\delta
			\leqNote{\eqref{eqn:maximalu}} \abs{x_{r-u}(l+u)}-\delta
			\leqNote{\eqref{eqn:stepIsLessThanDelta}} \abs{B_w(x_{r-u-1})(l+u)}.
		\end{equation*}
		But since \(B^{-1}_w\) preserves the pointwise order we get that \(\abs{y_{u+1}(l+u+1)} \leq \abs{x_{r-u-1}(l+u+1)}\), which contradicts the maximality of \(u\). Thus the claim is proved.

		Let \(p\) be such that \(\norm{y_p} < \delta\). Using simple induction and~\eqref{eqn:renormedErrorDef}, one can show that
		\begin{align*}
			\delta > \norm{y_p} = \abs{y_p(l+p)} = \abs{\frac{\varepsilon}{w(l+1)\dots w(l+p)}- \delta \sum_{i=0}^{1-p}\frac{1}{w(l+p) \dots w(l+p+j)}}. 
		\end{align*}
		By taking \(l+p = m\) we get the first part of the claim. The second part can be shown similarly by starting with \(z_0 = B_w(y_0)\) and doing similar construction using \(B_w\) instead of \(B^{-1}_w\).
	\end{proof}
	
	The curious mind may wonder if one can obtain the same equivalence for $\ell_p$. Instead we pose the following question which is easily seen to be equivalent to asking if our characterisation holds for $\ell_p$.
	\begin{question}
		Does the periodic shadowing property for weighted backward (equiv. forward) shifts on \(\ell_p(\Z)\) (\(1 \leq p < \infty\)) coincide with the periodic shadowing property on \(c_0(\Z)\)?
	\end{question}
	Note that the shadowing property for weighted backward shifts on \(\ell_p(\Z)\) (\(1 \leq p < \infty\)) coincides with the shadowing property on \(c_0(\Z)\) \cite[Theorem 18]{Shadowing_structural_stability} and also the chain recurrence for weighted backward shifts for \(\ell_p(\Z)\) (\(1 \leq p < \infty)\) coincides with chain recurrence for \(c_0(\Z)\) \cite[Theorem 14]{Shadowing_and_chain_rec}. Since, by \Cref{prop:PSPsandwich} the periodic shadowing property is in between those two properties, a~negative answer to the question would be surprising. However, there are some properties of weighted shifts that differ on those spaces, see e.g.~\cite{Chaos_and_frequentl_hypercyclicity_for_weighted_shifts}.
	
	\bibliographystyle{abbrv}
	\bibliography{bibliography}
	
\end{document}